\newtheorem{defi}{Definition}
\newtheorem{lemma}[defi]{Lemma}
\newtheorem{theorem}[defi]{Theorem}
\newtheorem{corollary}[defi]{Corollary}
\newtheorem{proposition}[defi]{Proposition}
\newtheorem{remark}[defi]{Remark}
\renewcommand{\i}{\mathrm{i}}
\renewcommand{\d}[1]{\,\mathrm{d}#1 \,}
\newcommand{\ol}[1]{\overline{#1}}
\newcommand{\K}{\mathcal{K}}
\newcommand{\epsr}{\epsilon_{\mathrm{r}}}
\renewcommand{\epsilon}{\varepsilon}
\newcommand{\loc}{\mathrm{loc}}
\newcommand{\per}{\mathrm{per}}
\newcommand{\RR}{R}
\DeclareMathOperator{\supp}{\mathrm{supp}}
\DeclareMathOperator{\sign}{\mathrm{sign}}
\renewcommand{\Re}{\mathrm{Re}\,}
\renewcommand{\Im}{\mathrm{Im}\,}
\DeclareMathOperator{\curl}{curl}
\renewcommand{\div}{\mathrm{div} \,}
\newcommand*{\N}{\ensuremath{\mathbb{N}}}
\newcommand*{\Z}{\ensuremath{\mathbb{Z}}}
\newcommand*{\R}{\ensuremath{\mathbb{R}}}
\newcommand*{\C}{\ensuremath{\mathbb{C}}}
\begin{document}

\sloppy

\title{Volume Integral Equations for  Scattering from Anisotropic Diffraction Gratings}
\author{Armin Lechleiter\thanks{Center for Industrial Mathematics, University of Bremen, 28359 Bremen, Germany.}
\and Dinh-Liem Nguyen\thanks{DEFI, INRIA Saclay--Ile-de-France and Ecole Polytechnique, 91128 Palaiseau, France.}} 

\maketitle

\begin{abstract}
  We analyze electromagnetic scattering of TM polarized waves from a diffraction grating 
  consisting of a periodic, anisotropic, and possibly negative-index dielectric material.
  Such scattering problems are important for the modelization of, e.g., light propagation 
  in nano-optical components and metamaterials.  
  The periodic scattering problem can be reformulated as a strongly singular volume integral 
  equation, a technique that attracts continuous interest in the engineering community, but 
  rarely received rigorous theoretic treatment. 
  In this paper we prove new (generalized) G\r{a}rding inequalities in weighted and 
  unweighted Sobolev spaces for the strongly singular integral equation.
  These inequalities also hold for materials for which the real part
  takes negative values inside the diffraction grating, independently of the 
  value of the imaginary part. 
\end{abstract}

\section{Introduction}

We consider scattering of time-harmonic electromagnetic 
waves from diffraction gratings. These three-dimensional dielectric
structures are periodic in one spatial direction and invariant in a 
second, orthogonal, direction (compare Figure~\ref{fig:0}). They 
are used as optical components, e.g., to split up light into beams 
with different directions, and they serve in optical devices as, e.g., 
monochromators or as optical spectrometers.

\begin{figure}[h!!!!tb]
  \begin{center}
    \psfrag{x1}{$x_1$}
    \psfrag{x2}{$x_2$}
    \psfrag{x3}{$x_3$}
    \includegraphics[width=7cm]{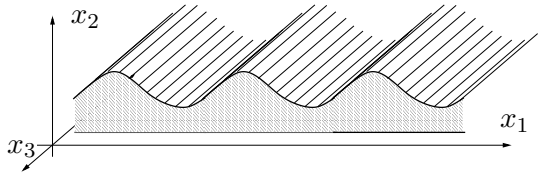}
    \caption{The diffraction 
    grating is periodic in $x_1$, invariant in $x_3$ and bounded in $x_2$.}
  \label{fig:0}
  \end{center}
\end{figure}

If the wave vector of an incident electromagnetic plane wave is chosen 
perpendicular to the invariance direction of the grating, Maxwell's equations 
decouple into scalar Helmholtz equations, known as transverse magnetic 
(TM) and transverse electric (TE) modes 
(these terms are not consistently used in the literature). In this 
paper, we consider the equation of the TM mode for a non-magnetic grating, 
\[
  \div ( A \nabla u) + k^2  u = 0, \qquad k>0,
\]
for an $\alpha$-quasi-periodic function $u$
(that is, $u(x_1 + 2\pi,x_2) = \exp(2\pi \i \alpha) u(x_1,x_2))$). 
In particular, we allow 
the real part of the discontinuous and matrix-valued material parameter $A$ 
to be negative-definite inside the grating structure, independently of the values 
of the imaginary part. (See below for the definition of the real and imaginary 
-- or: self- and non-selfadjoint -- parts of a matrix.) Negative definite material 
parameters are a feature that arises in the modelization of, e.g., optical metamaterials, 
but also for metals at certain frequencies, see, e.g.,~\cite{Shala2007}. 

We reformulate the scattering problem using ($\alpha$-quasi-periodic) volume 
integral equations. Those turn out to be strongly singular and do not 
fit into the standard Riesz theory, since the integral operators are not compact. 
Nevertheless, we prove G\r{a}rding inequalities for the integral equations in 
weighted $\alpha$-quasi-periodic Sobolev spaces, 
which yields a Fredholm framework for the scattering problem. 
This result even holds if the real part $\Re (A)$ of the material parameter 
is negative definite inside the grating, independently of the imaginary part $\Im(A)$.
Our approach extends a technique from~\cite{Kirsc2009}, where similar 
volume integral equations have been analyzed for free space scattering 
problems in case that the scalar real-valued contrast is strictly positive. 
Moreover, we also prove that the G\r{a}rding inequalities in 
weighted Sobolev spaces can be transformed to inequalities in standard 
$\alpha$-quasi-periodic Sobolev spaces, if the grating consists of isotropic material. 
Such standard G\r{a}rding inequalities are particularly useful for 
numerical approximation since the corresponding Galerkin methods are 
easier to implement in standard Sobolev spaces than in weighted spaces. 

In the engineering community, volume integral equations are a popular 
tool to numerically solve scattering problems, see, 
e.g.,~\cite{Richm1965, Richm1966}, since they allow to solve problems 
with complicated material parameters via one single integral equation. 
However, an analysis of this technique in suitable function spaces is 
usually missing, in particular when the material parameters are 
not globally smooth, and when the arising integral operators are not compact.

Recently, volume integral equations started to attract considerable interest 
in applied mathematics~\cite{Vaini2000, Hohag2001, Potth1999, Kirsc2009, Costa2010, Costa2011}.
The papers~\cite{Potth1999, Kirsc2009, Costa2010} analyze strongly 
singular integral equations for scattering in free space.
However,~\cite{Potth1999} considers media with globally 
continuous material properties, and the $L^2$-theory in~\cite{Kirsc2009}
does not yield physical solutions if the material parameter appearing
in the highest-order coefficients are not smooth. 
The paper~\cite{Costa2010} proves a G\r{a}rding inequality for a 
strongly singular volume integral equation arising from electromagnetic 
scattering from a (discontinuous) dielectric. Finally,~\cite{Costa2011} 
determines the essential 
spectrum of strongly singular volume integral operators arising in 
electromagnetic scattering for constant contrast.

The analysis of the integral equation for material parameters 
with negative real part is, to the best of our knowledge, the first 
application of $T$-coercivity (a well-known framework for variational 
formulations of elliptic partial differential equations with sign-changing coefficients, 
see~\cite{Bonne2007, Bonne2010, Bonne2011}) 
to volume integral equations. As usual, the material parameter is, 
however, not allowed to take arbitrary negative values; the solvability 
condition for instance excludes that the relative material parameter 
takes the value $-1$ inside the grating.

The paper is organized as follows: In Section~\ref{se:direct} we briefly 
recall variational theory for the direct scattering problem. In 
Section~\ref{se:integral} we derive the $\alpha$-quasi-periodic volume integral equation.
In Sections~\ref{se:isotropic} and~\ref{se:anisotropic} we prove G\r{a}rding 
inequalities in Sobolev spaces for this equation. The two appendices contain 
two well-known results that do not fit comfortably into the main body of the text.

\emph{Notation:} The usual $L^2$-based Sobolev and Lipschitz spaces on a domain 
$\Omega$ are denoted as $H^s(\Omega)$ and $C^{n,1}(\ol{\Omega})$, respectively. 
Further, $H^s_{\loc}(\Omega) = \{ v\in H^s(B) \text{ for all open balls } B \subset \Omega \}$.
As usual, real and imaginary parts of a 
square matrix $A$ are defined by $\Re A = (A+A^\ast)/2$ and 
$\Im A = (A - A^\ast) / (2\i)$, where $A^\ast$ denotes the
transpose conjugate matrix. Both $\Re A$ and $\Im A$ are self-adjoint 
and  $A = \Re(A) + \i \Im (A)$. We denote the absolute value and the 
Euclidean norm by $|\cdot|$, and the spectral matrix norm by $|\cdot |_2$.

\section{Problem Setting}
\label{se:direct}

Propagation of time-harmonic electromagnetic waves in an inhomogeneous
and isotropic medium without free currents is described by the time-harmonic 
Maxwell's equations for the electric and magnetic fields $E$ and $H$, respectively, 
\begin{equation}
\label{eq:maxwellEquation}
 \curl H + \i\omega\varepsilon E = \sigma E, \qquad \curl E - \i\omega\mu_0 H = 0,
\end{equation}
where $\omega>0$ denotes the angular frequency, $\varepsilon$  is the positive electric 
permittivity, $\mu_0$ is the (scalar, constant and positive) magnetic permeability, 
and $\sigma$ is the conductivity. The permittivity and conductivity are allowed to be
anisotropic, but required to be of the special form 
\[
   \epsilon = \left( \begin{matrix} \epsilon_T & 0 \\ 0 & \epsilon_{33} \end{matrix} \right),
   \qquad
   \sigma = \left( \begin{matrix} \sigma_T & 0 \\ 0 & \sigma_{33} \end{matrix} \right),
 \]
with real and symmetric $2\times2$ matrices $\epsilon_T = (\epsilon_{ij})_{i,j=1,2}$ 
and $\sigma_T = (\sigma_{ij})_{i,j=1,2}$, and real functions $\epsilon_{33}$ and 
$\sigma_{33}$. Furthermore, we assume in this paper that all three 
material parameters are independent of the third variable $x_3$ and $2\pi$-periodic 
in the first variable $x_1$. Moreover, $\epsilon$ equals $\epsilon_0 I_3 >0$ 
(where $I_n$ is the $n \times n$ unit matrix) and $\sigma$
equals zero outside the grating.

If an incident electromagnetic plane wave
independent of the third variable $x_3$ 
illuminates the grating, then 
Maxwell's equations~\eqref{eq:maxwellEquation} for 
the total wave field decouple into two scalar 
partial differential equations (see, e.g.,~\cite{Elsch1998}). 
Indeed, since both, $E$ and $H$ do not depend on $x_3$
it holds that 
$\curl E=(\partial_2 E_3,-\partial_1 E_3, \partial_1 E_2 -\partial_2 E_1)^\top$
and $\curl H=(\partial_2 H_3,-\partial_1 H_3, \partial_1 H_2 -\partial_2 H_1)^\top$.
If we plug these two relations into  the Maxwell's 
equations~\eqref{eq:maxwellEquation} we find that $H_3$ 
satisfies the two-dimensional scalar and anisotropic equation
\begin{equation}
 \label{eq:HMode}
 \div \left( \epsr^{-1} \nabla u\right) + k^2 u = 0 \qquad \text{in } \R^2
\end{equation}
with wave number $k := \omega\sqrt{\epsilon_0 \mu_0}$
and material parameter 
\[
 \epsr := \epsilon_0^{-1} \left[ \left( \begin{smallmatrix} \epsilon_{22}  & - \epsilon_{21} \\ - \epsilon_{12} & \epsilon_{11} \end{smallmatrix} \right) 
 + \i \left( \begin{smallmatrix} \sigma_{22} & - \sigma_{21} \\ - \sigma_{12} & \sigma_{11} \end{smallmatrix} \right) / \omega \right].
\]
The usual jump conditions for the Maxwell's equations imply that the field 
$u$ and the co-normal derivative $\nu \cdot \epsr^{-1} \nabla u$ are continuous across 
interfaces with normal vector $\nu$ where $\epsr$ jumps.
Note that $\epsilon_r$ is $2\pi$-periodic in 
$x_1$ and equals $I_2$  outside the grating. 

We seek for weak solutions to~\eqref{eq:HMode}
and assume that $\epsr \in L^{\infty}(\R^2, \C^{2 \times 2})$ 
takes values in the symmetric matrices, and
that $\epsr^{-1} \in L^{\infty}(\R^2,\C^{2 \times 2})$. 
Moreover, we suppose that $\Re (\epsr^{-1})$ is pointwise 
strictly positive or strictly negative definite, and that 
$\Im(\epsr^{-1})$ is almost everywhere positive semidefinite
(even if we do never exploit the latter assumption). 
Note that we do \emph{not} assume that $\Re(\epsr^{-1})$ is 
positive definite in all of $\R^2$.

For the two-dimensional problem~\eqref{eq:HMode}, incident
electromagnetic waves reduce to $u^i(x) = \exp(\i k \, x \cdot d)
= \exp(\i k(x_1d_1 + x_2d_2))$ where $|d|=1$ and $d_2 \not = 0$.
When the incident plane wave $u^i$ illuminates the diffraction 
grating there arises a scattered field $u^s$ such that the 
total field $u=u^i+u^s$ satisfies~\eqref{eq:HMode}.
Since $\Delta u^i + k^2 u^i=0$, the scattered field satisfies
\begin{equation}
 \label{eq:HmodeEquation}
 \div( \epsr^{-1} \nabla u^s) + k^2 u^s = -\div(Q \nabla u^i)
 \quad \text{in } \R^2, \qquad \text{where } Q := \epsr^{-1} - I_2
\end{equation}
is the contrast. 
Note that $u^i$ is $\alpha$-quasi-periodic with respect $x_1$, that is, 
\[
 u^i(x_1 + 2\pi,x_2) = e^{2\pi \i \alpha} u^i(x_1,x_2)
 \qquad \text{for $\alpha: = kd_1$.}
\]
Since $u^i$ is $\alpha$-quasi-periodic and $\epsr$ is periodic,
the total field and the scattered field both are also 
$\alpha$-quasi-periodic in $x_1$. For uniqueness of solution, 
the scattered field additionally has to satisfy a radiation condition. Here 
we require that $u^s$ above (below) the dielectric structure can be
represented by a uniformly converging Fourier(-Rayleigh)
series consisting of upwards (downwards) propagating or
evanescent plane waves, see~\cite{Kirsc1993, Bonne1994},
\begin{equation}
  \label{eq:RayleighCondition}
  u^s(x) = \sum_{j \in \Z} \hat{u}^\pm_j
  e^{\i\alpha_jx_1 \pm \i\beta_j (x_2-\rho)}, \quad x_2 \gtrless \pm\rho,
  \qquad \alpha_j := j + \alpha, \quad
  \beta_j := (k^2-\alpha^2_j)^{1/2},
\end{equation}
where $\rho > \sup \{|x_2|: \, (x_1,x_2)^\top \in \supp(Q) \}$.
Let us set $\Gamma_{\pm \rho} = (-\pi,  \pi) \times \{ \pm \rho \}$. 
The numbers $\hat{u}^{\pm}_j$ are the so-called Rayleigh
coefficients of $u^s$, defined by 
\[
  \hat{u}^{\pm}_j = \frac{1}{2\pi}
  \int_{\Gamma_{\pm \rho}} u^s(x_1,x_2) \exp(-\i\alpha_j x_1) \d{s}, \qquad j \in \Z.
\]
A solution to the Helmholtz equation is called radiating   
if it satisfies~\eqref{eq:RayleighCondition}. 
 If $k^2 > \alpha^2_j$
 then the $j$th mode $\exp(i\alpha_jx_1 \pm i\beta_j (x_2\pm \rho))$
 is a propagating mode, whereas $k^2 < \alpha^2_j$ means 
 that $\exp(i\alpha_jx_1 \pm i\beta_j (x_2\pm \rho))$ is an evanescent mode.

Variational solution theory for the scattering
problem~\eqref{eq:HmodeEquation}--\eqref{eq:RayleighCondition}
is well-known, see, e.g.,~\cite{Kirsc1993, Bonne1994, Elsch1998}.
Setting 
\[
  \Omega_\rho:=(-\pi,\pi)\times(-\rho,\rho)
\]
for  $\rho > \sup \{|x_2|: \, (x_1,x_2)^\top \in \supp(Q) \}$,
one can variationally reformulate the problem in the space 
$H^1_{\alpha}(\Omega_\rho) := \{u\in H^1(\Omega_\rho): \,  u=U|_{\Omega_\rho}
  \text{ for some } \alpha\text{-quasi-periodic } U \in H^1_{\loc}(\R^2) \}$.
The resulting variational formulation is to find 
$u^s \in H^1_{\alpha}(\Omega_\rho)$ such that 
\begin{equation}
 \begin{split}
   \label{eq:variationalPeriodic}
   \int_{\Omega_\rho}(\epsr^{-1}\nabla u^s \cdot \nabla \ol{v}
    - k^2 u^s \ol{v})\d{x} 
   - \int_{\Gamma_\rho} \ol{v}T^+(u^s)\d{s}
   & - \int_{\Gamma_{-\rho}} \ol{v}T^-(u^s)\d{s} \\
   & = -\int_{\Omega_\rho} Q \nabla u^i \cdot \nabla \ol{v}\d{x}
 \end{split}
\end{equation}
for all $v \in H^1_{\alpha}(\Omega_\rho)$. The operators 
$T^{\pm}$, $\varphi \mapsto \i \sum_{j\in\Z} \beta_j \hat{\varphi}^{\pm}_je^{\i \alpha_j x_1}$,
are the so-called exterior Dirichlet-to-Neumann operators on $\Gamma_{\pm\rho}$. 
The sesquilinear form in~\eqref{eq:variationalPeriodic} is bounded
on $H^1_{\alpha}(\Omega_\rho)$ and satisfies a
G\r{a}rding inequality if, e.g., $\Re(\epsr^{-1})$ is positive definite, that is, 
$\xi^\ast \Re(\epsr^{-1})(x) \xi \geq c|\xi|^2 \geq 0$ for $\xi \in \C^2$ and 
almost every $x \in \Omega_\rho$. 
In this case, analytic Fredholm theory implies that the 
set of real wave numbers (excluding Rayleigh frequencies) where 
non-uniqueness occurs is at most countable, see~\cite{Kirsc1993, Bonne1994}. 
If $\Re (\epsr^{-1})$ changes sign, proving Fredholm properties of 
the variational formulation~\eqref{eq:variationalPeriodic} is 
non-trivial, at least if $\Im \epsr^{-1}$ vanishes. 

In this paper, we establish a Fredholm framework 
for the scattering problem via integral equation techniques, 
that is, uniqueness of solution implies existence. 
We do not aim to prove the corresponding uniqueness results, since 
for periodic scattering problems such results are anyway not 
available at all frequencies, except under restrictive geometric 
(non-trapping) conditions, see~\cite{Bonne1994}. 

\section{Integral Equation Formulation}
\label{se:integral}

In this section, we reformulate the scattering problem~\eqref{eq:HMode} 
as a volume integral equation, and prove mapping properties of the integral 
operator  between Sobolev spaces. 
To this end, let us recall that $Q = \epsr^{-1}-I_2$ and denote by 
$\ol{D} \subset \Omega_{\rho}$ the support of $Q$, restricted
to one period $\{ -\pi < x_1 < \pi \}$. 
By $G_{k,\alpha}$ we denote the Green's function to the
$\alpha$-quasi-periodic Helmholtz equation in $\R^2$, see~\cite{Kirsc1993}. 
Under the  assumption that 
\begin{equation}
  \label{eq:nonResonance}
  k^2 \neq \alpha^2_j \qquad \text{for all } j \in \Z,
\end{equation}
the $\alpha$-quasi-periodic Green's function 
has the series representation 
\begin{equation}
  \label{eq:GkAlpha}
  G_{k,\alpha}(x)
  := \frac{\i}{4\pi} \sum_{j \in \Z}
  \frac{1}{\beta_j} \exp(\i\alpha_j x_1 + \i\beta_j |x_2|), 
  \quad  x= \left( \begin{matrix} x_1 \\ x_2  \end{matrix} \right) \in \R^2, \ 
  x \not = \left( \begin{matrix} 2\pi m \\ 0 \end{matrix} \right) \text{ for } m\in\Z.
\end{equation}
Due to~\eqref{eq:nonResonance} all the 
$\beta_j=(k^2-\alpha^2_j)^{1/2}$ are non-zero. 
  
\begin{lemma}[Cf.~\cite{Kirsc1993}]
  \label{th:decomposeGreen}
  The Green's function $G_{k,\alpha}$ can be split into
  $G_{k,\alpha}(x) = (\i/4) H^{(1)}_0(k|x|) + \Psi(x)$ in
  $\R^2$ where $\Psi$ is an analytic function solving the 
  Helmholtz equation $\Delta \Psi + k^2 \Psi = 0$ in $(-2\pi,2\pi) \times \R$.
\end{lemma}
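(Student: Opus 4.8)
The plan is to identify $G_{k,\alpha}$ and the free-space fundamental solution $\Phi(x) := (\i/4) H^{(1)}_0(k|x|)$ as fundamental solutions of the \emph{same} elliptic operator $\Delta + k^2$ that differ only in their source terms, and then to read off the analyticity of their difference from elliptic regularity. First I would recall that $\Phi$ is the radiating fundamental solution of the Helmholtz equation in $\R^2$, so that $\Delta \Phi + k^2 \Phi = -\delta_0$ in the distributional sense, with an integrable (logarithmic) singularity at the origin.

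Next I would check that $G_{k,\alpha}$ is the corresponding $\alpha$-quasi-periodic fundamental solution, i.e.
\[
  \Delta G_{k,\alpha} + k^2 G_{k,\alpha}
  = - \sum_{m \in \Z} e^{2\pi \i \alpha m}\, \delta_{(2\pi m, 0)^\top}
  \qquad \text{in } \R^2 .
\]
This is part of the construction in~\cite{Kirsc1993}, but it can also be verified directly from the series~\eqref{eq:GkAlpha}: writing $G_{k,\alpha} = \sum_{j} g_j(x_2)\, e^{\i\alpha_j x_1}$ with $g_j(x_2) = (\i/(4\pi\beta_j)) e^{\i\beta_j|x_2|}$, each coefficient solves the one-dimensional equation $g_j'' + \beta_j^2 g_j = -(2\pi)^{-1}\delta_0$ (the jump of $g_j'$ across $x_2=0$ produces exactly the Dirac term), and summing over $j$ reproduces the quasi-periodic Dirac comb through the identity $\sum_m e^{2\pi \i \alpha m}\delta_{2\pi m}(x_1) = (2\pi)^{-1}\sum_j e^{\i\alpha_j x_1}$.

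Then I would set $\Psi := G_{k,\alpha} - \Phi$. Since both terms are locally integrable, $\Psi$ is a well-defined distribution on $\R^2$, and subtracting the two source identities shows that the $m=0$ Dirac mass cancels, leaving
\[
  \Delta \Psi + k^2 \Psi
  = - \sum_{m \neq 0} e^{2\pi \i \alpha m}\, \delta_{(2\pi m, 0)^\top}
  \qquad \text{in } \R^2 .
\]
All remaining sources sit at the points $(2\pi m,0)^\top$ with $|m|\geq 1$, hence with $|x_1|\geq 2\pi$; they lie outside the open strip $(-2\pi,2\pi)\times\R$. Consequently $\Delta\Psi + k^2\Psi = 0$ there in the distributional sense. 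Because $\Delta + k^2$ is elliptic with constant, hence analytic, coefficients, analytic hypoellipticity -- equivalently, the real-analyticity of solutions of the Helmholtz equation -- upgrades the distribution $\Psi$ to a real-analytic function on $(-2\pi,2\pi)\times\R$ solving $\Delta\Psi + k^2\Psi = 0$, which is the claim.

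The delicate point to get right is the bookkeeping of the singularities, namely confirming that $G_{k,\alpha}$ has \emph{exactly} the same local singular behaviour at the origin as $\Phi$, so that the difference is genuinely source-free inside the strip. I would deliberately avoid the tempting shortcut of writing $\Psi$ as the lattice sum $\sum_{m\neq 0} e^{2\pi\i\alpha m}\Phi(\cdot - (2\pi m,0)^\top)$ and differentiating term by term: although each summand is analytic and solves the Helmholtz equation away from its singularity at $(2\pi m,0)^\top\notin(-2\pi,2\pi)\times\R$, the Hankel asymptotics give only $|m|^{-1/2}$ decay, so this series converges merely conditionally and does not by itself justify interchanging summation and differentiation. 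The distributional/elliptic-regularity route circumvents this convergence issue entirely.
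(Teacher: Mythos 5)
Your proof is correct, but note that there is nothing in the paper to compare it against: the lemma is stated with the citation ``Cf.~\cite{Kirsc1993}'' and never proved in the text, so yours is a genuine reconstruction rather than a variant of an internal argument. The route you take --- showing $(\Delta+k^2)G_{k,\alpha} = -\sum_{m\in\Z}e^{2\pi\i\alpha m}\delta_{(2\pi m,0)^\top}$ in the distributional sense, cancelling the $m=0$ source against that of $(\i/4)H^{(1)}_0(k|\cdot|)$, and upgrading the source-free distribution $\Psi$ to a real-analytic Helmholtz solution on the strip by analytic hypoellipticity --- is sound, and the individual computations check out: the jump relation $g_j''+\beta_j^2 g_j=-(2\pi)^{-1}\delta_0$, the Poisson-summation identity producing the quasi-periodic Dirac comb, and the observation that the surviving sources $(2\pi m,0)^\top$, $m\neq 0$, lie outside the open strip $(-2\pi,2\pi)\times\R$. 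You are also right to refuse the image-sum shortcut, whose terms decay only like $|m|^{-1/2}$. Two points deserve one more sentence each in a polished write-up. First, applying $\Delta+k^2$ term by term to the series~\eqref{eq:GkAlpha} presupposes that this series converges in $\mathcal{D}'(\R^2)$; this is easy to check (testing against $\varphi\in C_c^\infty(\R^2)$ yields coefficients decaying faster than any power of $|j|$ by integration by parts in $x_1$, while the modal amplitudes are $\mathcal{O}(1/|j|)$ uniformly in $x_2$), and with it the interchange is legitimate because differentiation is continuous on distributions --- but as written you assume this silently. Second, you do not actually need local integrability of $G_{k,\alpha}$ to define $\Psi$: a difference of two distributions is a distribution, which is just as well, since the integrable singularity of $G_{k,\alpha}$ is normally \emph{deduced} from this lemma rather than known beforehand. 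Finally, your argument uses the standing non-resonance assumption~\eqref{eq:nonResonance} (all $\beta_j\neq 0$); it is worth stating explicitly, since the lemma itself does not repeat it.
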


We also define a periodized Green's
function, firstly setting
\begin{equation}
  \label{eq:DefineKernel}
  \K_\rho(x) := G_{k,\alpha}(x),
  \quad x= \left( \begin{matrix} x_1 \\ x_2 \end{matrix} \right) \in \R \times (-\rho, \rho), \
  x \not = \left( \begin{matrix} 2\pi m \\  0 \end{matrix} \right) \text{ for } m\in\Z,
\end{equation}
and secondly extending $\K_\rho(x)$ $2\rho$-periodically
in $x_2$ to $\R^2$. 

The trigonometric polynomials
\begin{equation}
  \label{eq:trigonometricPolynomial}
  \varphi_j(x)
  := \frac{1}{\sqrt{4\pi\rho}}
  \exp\Big( {\i(j_1 + \alpha)x_1 + \i\frac{j_2\pi}{\rho}x_2}\Big),
  \quad j = \left( \begin{matrix} j_1 \\ j_2 \end{matrix} \right) \in \Z^2,
\end{equation}
are orthonormal in $L^2(\Omega_{\rho})$. They differ 
from the usual Fourier basis (see,
e.g.,~\cite[Section 10.5.2]{Saran2002}) only by 
a phase factor $\exp(\i \alpha x_1)$, and hence also 
form a basis of $L^2(\Omega_{\rho})$. For 
$f\in L^2(\Omega_{\rho})$ and $j =(j_1,j_2)^\top \in \Z^2$,
$\hat{f}(j) := \int_{\Omega_{\rho}} f \, \ol{\varphi_j} \d{x}$
are the Fourier coefficients of $f$.
For $0\leq s<\infty$ we define a fractional Sobolev space 
$H^s_{\per}(\Omega_{\rho})$ as the subspace of
functions in $L^2(\Omega_{\rho})$ such that
\begin{equation}
  \label{eq:sobolevSpaces}
  \| f \|^2_{H^s_{\per}(\Omega_{\rho})}
  = \sum_{j \in \Z^2}(1 + |j|^2)^s |\hat{f}(j)|^2 < \infty.
\end{equation}
It is well-known that for integer values of $s$, these 
spaces correspond to spaces of $\alpha$-quasi-periodic functions that are
$s$ times weakly differentiable, and that the above norm is then
equivalent to the usual integral norms.

Lemma~\ref{th:decomposeGreen} implies in particular that
$\K_\rho$ has an integrable singularity and that the Fourier
coefficients $\hat{\K}_\rho(j)$ are well-defined. To compute 
these coefficients explicitly, we set 
\[
  \lambda_j
  := k^2 -(j_1 + \alpha)^2 - \bigg( \frac{j_2 \pi}{\rho} \bigg)^{2}
  \quad \text{for } j\in \Z^2.
\]

\begin{theorem}
  \label{th:KHat}
  Assume that $k^2 \not = \alpha_j^2$ for all $j \in \Z$.
  Then the Fourier coefficients of the kernel $\K_\rho$
  from~\eqref{eq:DefineKernel} are given by
  \[
    \hat\K_\rho(j)
    = \begin{cases}
      \frac{\cos(j_2 \pi)e^{\i \beta_{j_1} \rho}-1}
        {\sqrt{4\pi \rho} \, \lambda_j} &  \text{for } \lambda_j \neq 0, \\
      \frac{\i}{4 j_2} \left(\frac{\rho}{\pi}\right)^{3/2} & \text{else},
    \end{cases}
    \qquad j = \left( \begin{matrix} j_1 \\ j_2 \end{matrix} \right) \in \Z^2.
  \]
\end{theorem}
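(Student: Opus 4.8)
The plan is to insert the series representation~\eqref{eq:GkAlpha} of $G_{k,\alpha}$ directly into the defining integral $\hat\K_\rho(j) = \int_{\Omega_\rho}\K_\rho\,\ol{\varphi_j}\,\d{x}$ and to collapse it by orthogonality in the $x_1$-variable. Writing $\ol{\varphi_j} = (4\pi\rho)^{-1/2}e^{-\i\alpha_{j_1}x_1}e^{-\i j_2\pi x_2/\rho}$ with $\alpha_{j_1} = j_1+\alpha$, the $x_1$-integration over $(-\pi,\pi)$ sees only the factor $e^{\i(\alpha_n - \alpha_{j_1})x_1} = e^{\i(n-j_1)x_1}$ in the $n$-th summand, which integrates to $2\pi\,\delta_{n,j_1}$. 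Hence only the single term $n = j_1$ survives, and the area integral reduces to the one-dimensional integral
\[
  \hat\K_\rho(j) = \frac{\i}{2\beta_{j_1}\sqrt{4\pi\rho}}\int_{-\rho}^{\rho}e^{\i\beta_{j_1}|x_2|}\,e^{-\i j_2\pi x_2/\rho}\,\d{x_2}.
\]

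Next I would evaluate this elementary integral by splitting the domain at $x_2 = 0$, which is forced by the modulus $|x_2|$. Abbreviating $p := \beta_{j_1}\rho$ and $q := j_2\pi$, the two halves contribute $\tfrac{\rho}{\i(p-q)}(e^{\i(p-q)}-1)$ and $\tfrac{\rho}{\i(p+q)}(e^{\i(p+q)}-1)$. Since $e^{\pm\i q} = e^{\pm\i j_2\pi} = \cos(j_2\pi)$, both exponentials collapse to the common value $C := \cos(j_2\pi)e^{\i\beta_{j_1}\rho}$, so that each bracket equals $C-1$. Adding the two halves, using $\tfrac{1}{p-q}+\tfrac{1}{p+q} = \tfrac{2p}{p^2-q^2}$ together with $p^2-q^2 = \beta_{j_1}^2\rho^2 - j_2^2\pi^2 = \rho^2\lambda_j$, cancels the prefactor $\beta_{j_1}$ and yields exactly $\hat\K_\rho(j) = (C-1)/(\sqrt{4\pi\rho}\,\lambda_j)$ whenever $\lambda_j \neq 0$. (The numerator $C-1$ is, up to the normalization $\sqrt{4\pi\rho}$, the Fourier coefficient of the distributional source $(\Delta + k^2)\K_\rho$, which explains the structure of the formula.)

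For the degenerate case $\lambda_j = 0$ I would return to the one-dimensional integral. The non-resonance hypothesis $k^2 \neq \alpha_j^2$ rules out $j_2 = 0$, since $\lambda_j = 0$ with $j_2=0$ would force $k^2 = \alpha_{j_1}^2$; thus $\beta_{j_1} = |j_2|\pi/\rho$ is real and positive and exactly one of the two phases $p\mp q$ vanishes. The corresponding half-integral then contributes $\rho$, while on the other half $C = \cos^2(j_2\pi) = 1$, so its $C-1$ factor annihilates it. This leaves $\hat\K_\rho(j) = \i\rho/(2\beta_{j_1}\sqrt{4\pi\rho})$, and substituting the resonant value $\beta_{j_1} = |j_2|\pi/\rho$ produces the closed form recorded in the second case.

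The one step that genuinely needs care — and the main obstacle — is the interchange of the infinite sum with the integration underlying the first paragraph, because the series~\eqref{eq:GkAlpha} does not converge absolutely near the singularity at the origin. I would handle this with Lemma~\ref{th:decomposeGreen}: it guarantees $\K_\rho\,\ol{\varphi_j} \in L^1(\Omega_\rho)$, the singularity being that of $(\i/4)H^{(1)}_0(k|\cdot|)$ and hence only logarithmic and integrable, so Fubini reduces the area integral to the iterated integral $\int_{-\rho}^{\rho}\!\big(\int_{-\pi}^{\pi}\cdots\,\d{x_1}\big)\,\d{x_2}$. For every fixed $x_2 \neq 0$ the remaining series converges absolutely and geometrically, since $\beta_n = \i(\alpha_n^2-k^2)^{1/2}$ for large $|n|$ supplies a decay factor $e^{-(\alpha_n^2-k^2)^{1/2}|x_2|}$ together with $1/\beta_n = O(1/|n|)$; term-by-term $x_1$-integration is therefore legitimate on the full-measure set $\{x_2 \neq 0\}$, which is all Fubini requires.
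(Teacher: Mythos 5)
Your proof is correct, but it follows a genuinely different route from the paper's. The paper never inserts the series~\eqref{eq:GkAlpha} into the area integral: instead it applies Green's second identity on $\Omega_\rho\setminus B(0,\delta)$, exploiting $(\Delta+k^2)\varphi_j=\lambda_j\varphi_j$, so that the series is only ever evaluated on the horizontal boundaries $\Gamma_{\pm\rho}$ (where it converges uniformly), the vertical boundary terms cancel by $\alpha$-quasi-periodicity, and the excised singularity contributes $-1/\sqrt{4\pi\rho}$ through the small-argument asymptotics of $H^{(1)}_0$; the case $\lambda_j=0$ is then recovered by a L'H\^{o}pital limit, which tacitly relies on continuity of $\hat\K_\rho(j)$ with respect to $k^2$. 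You instead compute directly: Fubini (justified by the $L^1$ bound from Lemma~\ref{th:decomposeGreen}), orthogonality in $x_1$ (valid for each fixed $x_2\neq 0$ by the absolute, uniform-in-$x_1$ convergence of the series), and elementary one-dimensional exponential integrals. Your route is more elementary --- no potential theory, no boundary terms --- and it treats $\lambda_j=0$ by the same exact evaluation rather than by a limiting argument; its only delicate point is the sum--integral interchange, which you identify and justify correctly. The paper's route, conversely, never has to discuss convergence of the series near its singular line $\{x_2=0\}$.

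One discrepancy should be flagged, and it lies in the paper, not in your computation. In the degenerate case your evaluation gives
\[
  \hat\K_\rho(j)=\frac{\i\rho}{2\beta_{j_1}\sqrt{4\pi\rho}}
  =\frac{\i}{4|j_2|}\Big(\frac{\rho}{\pi}\Big)^{3/2},
\]
because $\lambda_j=0$ forces $\beta_{j_1}=|j_2|\pi/\rho>0$; so you obtain $|j_2|$, not the $j_2$ stated in the theorem (the paper's own L'H\^{o}pital computation makes the same slip by writing $\sqrt{(j_2\pi/\rho)^2}=j_2\pi/\rho$). Your version is the correct one: $\K_\rho$ is even in $x_2$ and $\varphi_{(j_1,-j_2)}(x_1,x_2)=\varphi_{(j_1,j_2)}(x_1,-x_2)$, so $\hat\K_\rho(j_1,j_2)$ must be even in $j_2$ --- which the stated formula violates for $j_2<0$, while both of your formulas respect this symmetry. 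Accordingly, you should not claim that substituting $\beta_{j_1}=|j_2|\pi/\rho$ ``produces the closed form recorded''; it produces the corrected form. The discrepancy is harmless downstream, since only the decay $|\hat\K_\rho(j)|\leq C/(1+|j|^2)$ enters Proposition~\ref{th:boundKh}.
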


\begin{remark}
Note that $\hat\K_\rho(j)$ is well-defined for $\lambda_j = 0$: Since 
$k^2 \not = \alpha_n^2$ for all $n\in \Z$, the definition of 
$\lambda_j$ implies that $j_2 \not = 0$ whenever $\lambda_j = 0$.
For completeness, we include a proof, noting that the case 
$\lambda_j \not=0$ is also shown in~\cite[Section 7.1]{Sandf2010}.
\end{remark}

\begin{proof}
 It is easy to check that $(\Delta + k^2)\varphi_j = \lambda_j \varphi_j$
 for $j=(j_1,j_2)^\top \in \Z^2$.
 If $\lambda_j \not = 0$, Green's second identity implies that
 \begin{align}
   \hat{\K}_\rho(j)
   &= \int_{\Omega_{\rho}} \K_\rho(x)\ol{\varphi_j(x)}\d{x}
   = \lambda_j^{-1} \lim_{\delta\rightarrow 0}\int_{\Omega_{\rho}\setminus B(0,\delta)}
     G_{k,\alpha}(x)\ol{(\Delta + k^2)\varphi_j(x)} \d{x} \nonumber \\
   & = \lambda_j^{-1} \lim_{\delta\rightarrow 0}\left[\left(\int_{\partial \Omega_{\rho}}
     + \int_{\partial B(0,\delta)}\right) \left(G_{k,\alpha}
     \frac{\partial\ol{\varphi_j}}{\partial \nu}
     - \frac{\partial G_{k,\alpha}}{\partial\nu}\ol{\varphi_j}\right)\d{s}\right.
     \label{eq:2} \\
   & \hspace*{4cm} + \left.  \int_{\Omega_{\rho}\setminus B(0,\delta)} (\Delta + k^2) G_{k,\alpha}(x)\ol{\varphi_j(x)} \d{x} \right],
 \end{align}
 where $\nu$ denotes the exterior normal vector to $B(0,\delta)$.
 The last volume integral vanishes since
 $(\Delta + k^2) G_{k,\alpha} = 0$ in
 $\Omega_{\rho}\setminus B(0,\delta)$ for any $\delta>0$.
 Let us now consider the first integral in~\eqref{eq:2}.
 The boundary of $\Omega_{\rho}$ consists of two
 horizontal lines $\Gamma_{\pm \rho}$ and two vertical
 lines $\{ (x_1,x_2): \, x_1 = \pm \pi, \, -\rho < x_2 < \rho \}$.
 Hence, the normal vector $\nu$ on these boundaries is either
 $(\pm 1, 0)^\top$ or $(0, \pm 1)^\top$. Straightforward
 computations yield that
 \begin{eqnarray}
    G_{k,\alpha}(x_1,\pm \rho)
   = \frac{\i}{4\pi}\sum_{n\in \Z}\frac{e^{\i\beta_n \rho}}{\beta_n} e^{\i \alpha_n x_1},\quad 
    \partial_2 G_{k,\alpha}(x_1,\pm \rho)
   = \mp \frac{1}{4\pi}\sum_{n\in \Z} e^{\i\beta_n \rho} e^{\i \alpha_n x_1}, \label{eq:3} \\
    \ol{\varphi_j(x_1,\pm \rho)}
   = \frac{1}{\sqrt{4 \pi \rho}} e^{-\i\alpha_{j_1}x_1} \cos(j_2 \pi),
    \quad \text{and} \quad \partial_2\ol{\varphi_j(x_1,\pm \rho)}
   = -\frac{\i j_2 \pi}{\rho} \ol{\varphi(x_1,\pm \rho)}. \label{eq:4}
 \end{eqnarray}
 In consequence,
 \begin{align*}
   \int_{\Gamma_{\pm \rho}} \left(G_{k,\alpha} \frac{\partial\ol{\varphi_j}}{\partial \nu} - \frac{\partial G_{k,\alpha}}{\partial \nu} \ol{\varphi_j} \right) \d{s}
   &= -\int_{\Gamma_{\rho}} \partial_2 G_{k,\alpha} \ol{\varphi_j} \d{s}
   + \int_{\Gamma_{-\rho}} \partial_2 G_{k,\alpha} \ol{\varphi_j}\d{s} \\
   &= -2\int_{\Gamma_{\rho}} \partial_2 G_{k,\alpha} \ol{\varphi_j} \d{s}.
 \end{align*}
 Using the above formulas for $\partial_2 G_{k,\alpha}$ and
 $\ol{\varphi_j}$ in~\eqref{eq:3} and~\eqref{eq:4},
 respectively, we find that
 \[
   -2 \int_{\Gamma_{\rho}} \partial_2 G_{k,\alpha}\ol{\varphi_j} \d{s}
   = \frac{\cos(j_2 \pi)}{\sqrt{4\pi \rho}} e^{\i \beta_{j_1} \rho}.
 \]
 Computing the partial derivatives of $G_{k,\alpha}$
 and $\varphi_j$ with respect to $x_1$ analogously to the 
 above computations, one finds that the integrals on the
 vertical boundaries of $\Omega_{\rho}$ vanish due to 
 the $\alpha$-quasi-periodicity of both functions. 
 Thus, we obtain that
 \begin{equation}
 \label{eq:boundary2h}
   \int_{\partial \Omega_{\rho}}
   \left(G_{k,\alpha}\frac{\partial\ol{\varphi_j}}{\partial \nu}
   -\frac{\partial G_{k,\alpha}}{\partial\nu} \ol{\varphi_j} \right)\d{s}
   = \frac{\cos(j_2 \pi)}{\sqrt{4\pi \rho}} e^{\i \beta_{j_1} \rho}.
 \end{equation}
 Now we consider the second integral in~\eqref{eq:2}.
 From Lemma~\ref{th:decomposeGreen} we know that
 $G_{k,\alpha}(x) = \frac{\i}{4} H^{(1)}_0(k|x|) + \Psi(x)$
 where $\Psi$ is a smooth function in $\Omega_\rho$. Obviously,
 \[
  \lim_{\delta\rightarrow 0} \int_{\partial B(0,\delta)}\left(\Psi\frac{\partial\ol{\varphi_j}}{\partial \nu} - \frac{\partial \Psi}{\partial \nu}\ol{\varphi_j}\right) \d{s} = 0.
 \]
 The asymptotics of $H^{(1)}_0$ and its derivative for small arguments,
 \[
   H^{(1)}_0(r)
   = \frac{2\i}{\pi}\log r + \mathcal{O}(1)
   \quad \text{and} \quad 
   (H^{(1)}_0)'(r)
   = \frac{2\i}{\pi r} + \mathcal{O}(1) \quad \text{as } r \to 0,
 \]
 allow to show that
 \begin{equation}
   \label{eq:boundaryDelta}
   \lim_{\delta\rightarrow 0} \int_{\partial B(0,\delta)}
   \left(G_{k,\alpha}\frac{\partial\ol{\varphi_j}}{\partial\nu}
   -\frac{\partial G_{k,\alpha}}{\partial r}\ol{\varphi_j}\right) \d{s},
   = - \frac{1}{\sqrt{4 \pi \rho}},
 \end{equation}
 see, e.g.,~\cite[Theorem 2.2.1]{Saran2002}.
 Combining~\eqref{eq:boundary2h}
 with~\eqref{eq:boundaryDelta} yields that
 \[
   \K_\rho(j) = 
   \frac{1}{\sqrt{4\pi\rho} \lambda_j}(\cos(j_2 \pi) e^{\i\beta_{j_1}\rho}-1)
   \quad \text{for } \lambda_j \not = 0.
 \]
 For $\lambda_j = 0$ we use de
 L'H\^{o}spital's rule to find that
 \[
   \K_\rho(j) 
   = \lim_{\gamma \to (j_1+\alpha)^2 + (j_2 \pi / \rho)^2}
     \frac{\cos(j_2 \pi) \exp(\i \rho \sqrt{\gamma - (j_1+\alpha)^2})-1}
     {\sqrt{4 \pi \rho} \, [\gamma - (j_1+\alpha)^2 - (j_2 \pi / \rho)^2]} 
   = \frac{\i \rho^{3/2}}{4 \pi^{3/2} j_2}.
 \]
Note that the assumption that
$k^2 \not = \alpha_j^2$ for all $j\in\Z^2$ implies
that $\lambda_j$ and $j_2$ cannot vanish simultaneously.
 \end{proof}

Since the Fourier coefficients of the $\K_\rho$
decay quadratically, 
$|\hat{\K}_\rho(j)| \leq C / (1 + (j_1 +\alpha)^2 + (j_2 \pi /\rho)^2)$
for $j \in \Z^2$, the convolution operator with kernel $\K_\rho$ is 
bounded from $L^2(\Omega_{\rho})$ into $H^2_{\per}(\Omega_{\rho})$.

\begin{proposition}
  \label{th:boundKh}
  Assume that $k^2 \not = \alpha_j^2$ for all $j \in \Z$.
  Then the convolution operator $K_\rho$, defined by 
  \begin{equation*}
    (K_\rho f)(x) = \int_{\Omega_{\rho}} \K_\rho(x-y) f(y) \d{y}
    \quad \text{for } x\in \Omega_{\rho},
  \end{equation*}
  is bounded from $L^2(\Omega_{\rho})$ into
  $H^2_{\per}(\Omega_{\rho})$.
\end{proposition}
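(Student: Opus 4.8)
The plan is to exploit the fact that on the quasi-periodic torus the convolution $K_\rho$ acts as a Fourier multiplier with symbol proportional to $\hat\K_\rho(j)$, so that the mapping property reduces to the decay of these Fourier coefficients. The characters $\varphi_j$ satisfy the product rule $\varphi_j(z+y)=\sqrt{4\pi\rho}\,\varphi_j(z)\varphi_j(y)$, in which the $\alpha$-quasi-periodic part enters symmetrically through $(j_1+\alpha)$ and hence causes no difficulty; this is the structural identity that will turn convolution into multiplication.

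First I would compute the Fourier coefficients of $K_\rho f$ for $f\in L^2(\Omega_\rho)$. Writing $\widehat{(K_\rho f)}(j)=\int_{\Omega_\rho}\int_{\Omega_\rho}\K_\rho(x-y)f(y)\,\ol{\varphi_j(x)}\d{y}\d{x}$ and applying Fubini's theorem — justified because $\K_\rho\in L^1(\Omega_\rho)$ by Lemma~\ref{th:decomposeGreen} (the logarithmic singularity of $H^{(1)}_0$ is integrable), $f\in L^2(\Omega_\rho)\subset L^1(\Omega_\rho)$, and $\varphi_j$ is bounded — I would substitute $z=x-y$ in the inner integral. The crucial point is that the integrand $\K_\rho(z)\ol{\varphi_j(z)}$ is genuinely $(2\pi)\times(2\rho)$-periodic, since the $\alpha$-quasi-periodic phases of $\K_\rho$ and of $\ol{\varphi_j}$ cancel and $\K_\rho$ is $2\rho$-periodic in $x_2$ by construction; hence the integral over the shifted cell equals the one over $\Omega_\rho$. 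Together with the product rule this yields the multiplier identity $\widehat{(K_\rho f)}(j)=\sqrt{4\pi\rho}\,\hat\K_\rho(j)\,\hat f(j)$ for all $j\in\Z^2$.

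Granting this identity, the norm estimate is a routine Parseval computation. By the definition~\eqref{eq:sobolevSpaces} of the $H^2_{\per}$-norm,
\[
 \|K_\rho f\|^2_{H^2_{\per}(\Omega_\rho)} = 4\pi\rho\sum_{j\in\Z^2}(1+|j|^2)^2\,|\hat\K_\rho(j)|^2\,|\hat f(j)|^2 .
\]
It then remains to bound $(1+|j|^2)^2|\hat\K_\rho(j)|^2$ uniformly in $j$. The quadratic-decay estimate stated just before the proposition gives $|\hat\K_\rho(j)|\le C/(1+(j_1+\alpha)^2+(j_2\pi/\rho)^2)$, and since $\alpha$ is fixed and $\pi/\rho$ is a positive constant the two quadratic expressions are comparable, i.e. there is $c>0$ with $1+(j_1+\alpha)^2+(j_2\pi/\rho)^2\ge c\,(1+|j|^2)$ for every $j\in\Z^2$. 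Hence $(1+|j|^2)^2|\hat\K_\rho(j)|^2\le (C/c)^2$ uniformly, and the displayed sum is dominated by $4\pi\rho\,(C/c)^2\sum_{j}|\hat f(j)|^2=4\pi\rho\,(C/c)^2\|f\|^2_{L^2(\Omega_\rho)}$, proving boundedness of $K_\rho\colon L^2(\Omega_\rho)\to H^2_{\per}(\Omega_\rho)$.

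The main obstacle I expect is not this final estimate, which is immediate once the symbol is identified, but the rigorous derivation of the multiplier identity: one must justify the interchange of the two integrations and, above all, verify that the change of variables $z=x-y$ leaves the integral unchanged although the domain is only a single period. This is exactly where the $2\rho$-periodic extension of $\K_\rho$ in $x_2$ and the cancellation of the $\alpha$-quasi-periodic phases enter. An alternative and perhaps cleaner route that sidesteps the domain-shift issue is to establish the multiplier identity first on finite trigonometric polynomials $f=\sum_{|j|\le N}\hat f(j)\varphi_j$, using the orthonormality $\int_{\Omega_\rho}\ol{\varphi_n(y)}\varphi_m(y)\d{y}=\delta_{nm}$, and then to extend it to all of $L^2(\Omega_\rho)$ by density together with the $L^2$-boundedness of $K_\rho$ (which already follows from $\hat\K_\rho\in\ell^\infty$).
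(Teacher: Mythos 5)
Your proof is correct and follows exactly the route the paper intends: the paper justifies Proposition~\ref{th:boundKh} precisely by the remark preceding it, namely that convolution with $\K_\rho$ acts as a Fourier multiplier whose symbol $\hat\K_\rho(j)$ decays quadratically, so that Parseval's identity yields the gain of two derivatives. Your write-up merely makes explicit the details the paper leaves implicit (the phase cancellation making $\K_\rho\,\ol{\varphi_j}$ genuinely periodic, the Fubini justification, and the multiplier identity $\widehat{K_\rho f}(j)=\sqrt{4\pi\rho}\,\hat\K_\rho(j)\hat f(j)$), all of which are accurate.
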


Recall that $D \subset \Omega_\rho$ is the
support of contrast $Q$. Let us additionally 
introduce 
\[
  \Omega := (-\pi,\pi)\times\R
\] 
and $H^\ell_{\alpha}(\Omega_\RR) := \{u\in H^\ell(\Omega_\RR): \,  u=U|_{\Omega_\RR}
  \text{ for some } \alpha\text{-quasi-periodic } U \in H^\ell_{\loc}(\R^2) \}$ for $\ell \in \N$, $R>0$.

\begin{lemma}
  \label{th:H2smoothness}
  Assume that $k^2 \not = \alpha_j^2$ for all $j \in \Z$.
  Then the volume potential $V_k$ defined by
  \[
    (V_k f)(x) = \int_{D} G_{k,\alpha}(x-y) f(y) \d{y},
    \quad x \in \Omega,
  \]
  is bounded from $L^2(D)$ into $H^2_\alpha(\Omega_\RR)$
  for all $\RR>0$.
\end{lemma}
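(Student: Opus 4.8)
The plan is to reduce the statement to the boundedness of the periodic convolution operator $K_\rho$ established in Proposition~\ref{th:boundKh}, applied on a strip whose vertical half-width is chosen large compared with both $R$ and $\rho$. The key observation is that the periodized kernel $\K_r$ coincides with the genuine $\alpha$-quasi-periodic Green's function $G_{k,\alpha}$ on the relevant range of arguments as soon as $r$ is large enough, so that $V_k f$ and $K_r$ applied to the zero extension of $f$ agree on $\Omega_R$. Since $G_{k,\alpha}(\cdot - y)$ is $\alpha$-quasi-periodic in $x_1$ for every fixed $y$, the potential $V_k f$ is automatically $\alpha$-quasi-periodic; hence only the $H^2$-regularity has to be established, while the quasi-periodicity demanded by the definition of $H^2_\alpha(\Omega_R)$ comes for free.

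First I would fix $R>0$ and choose $r > R + \rho$. Let $f_0 \in L^2(\Omega_r)$ denote the extension of $f$ by zero from $D$ to $\Omega_r$; this is legitimate because $D \subset \Omega_\rho \subset \Omega_r$, and $\|f_0\|_{L^2(\Omega_r)} = \|f\|_{L^2(D)}$. For $x \in \Omega_R$ and $y \in D$ one has $x_1 - y_1 \in (-2\pi, 2\pi)$ and, crucially, $x_2 - y_2 \in (-(R+\rho),\,R+\rho) \subset (-r,r)$. By the definition~\eqref{eq:DefineKernel} of $\K_r$, which equals $G_{k,\alpha}$ on $\R \times (-r,r)$ before being $2r$-periodized in $x_2$, the $x_2$-periodization is never triggered, so $\K_r(x-y) = G_{k,\alpha}(x-y)$ on $\Omega_R \times D$. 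Consequently
\begin{equation*}
  (K_r f_0)(x) = \int_{\Omega_r} \K_r(x-y)\, f_0(y) \d{y} = \int_D G_{k,\alpha}(x-y)\, f(y) \d{y} = (V_k f)(x), \qquad x \in \Omega_R .
\end{equation*}
Proposition~\ref{th:boundKh}, applied with $\rho$ replaced by $r$ (the hypothesis $k^2 \neq \alpha_j^2$ does not involve $r$), then gives $K_r f_0 \in H^2_{\per}(\Omega_r)$ together with $\|K_r f_0\|_{H^2_{\per}(\Omega_r)} \le C \|f_0\|_{L^2(\Omega_r)}$.

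It remains to transfer this bound to $\Omega_R$. For the integer exponent $s=2$ the norm $\|\cdot\|_{H^2_{\per}(\Omega_r)}$ is equivalent to the usual integral $H^2$-norm on $\Omega_r$, and an element of $H^2_{\per}(\Omega_r)$ is precisely the restriction of an $\alpha$-quasi-periodic function belonging to $H^2_{\loc}(\R^2)$. Restricting to the subdomain $\Omega_R \subset \Omega_r$ and using the identity above yields
\begin{equation*}
  \|V_k f\|_{H^2(\Omega_R)} = \|K_r f_0\|_{H^2(\Omega_R)} \le \|K_r f_0\|_{H^2(\Omega_r)} \le C' \|K_r f_0\|_{H^2_{\per}(\Omega_r)} \le C'' \|f\|_{L^2(D)},
\end{equation*}
and the $\alpha$-quasi-periodic $H^2_{\loc}(\R^2)$-extension of $K_r f_0$ serves as the witness showing $V_k f|_{\Omega_R} \in H^2_\alpha(\Omega_R)$. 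As $R>0$ was arbitrary (the constant is permitted to depend on $R$ through $r$), the claim follows for all $R>0$.

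I expect the only genuinely delicate point to be the matching of the two kernels: one must check that choosing $r$ strictly larger than $R+\rho$ keeps the second argument $x_2 - y_2$ inside the base strip $(-r,r)$, so that the $2r$-periodization defining $\K_r$ leaves the kernel unchanged on $\Omega_R \times D$. Everything else is routine bookkeeping --- the automatic $\alpha$-quasi-periodicity of $V_k f$ inherited from $G_{k,\alpha}$, the trivial zero extension of $f$, and the equivalence of the periodic and integral $H^2$-norms together with the monotonicity of the $H^2$-norm under restriction to $\Omega_R \subset \Omega_r$.
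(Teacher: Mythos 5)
Your proof is correct, and it differs from the paper's in a genuine (if modest) way: both arguments hinge on the same key ingredient, Proposition~\ref{th:boundKh} (i.e.\ Theorem~\ref{th:KHat}) combined with the observation that the periodized kernel coincides with $G_{k,\alpha}$ as long as the difference $x_2-y_2$ stays inside the base strip, but they deploy it differently. The paper keeps the periodization width fixed at $2\rho$, writes $V_k g = \chi V_k g + (1-\chi)V_k g$ with a cutoff $\chi$ equal to $1$ on $D$ and vanishing for $|x_2|>\rho$, identifies $\chi V_k g = \chi K_{2\rho}g$ on $\Omega_\rho$ (exactly your kernel-matching step, with $|x_2-y_2|<2\rho$), and then handles the far-field piece $(1-\chi)V_k g$ separately, using that the series~\eqref{eq:GkAlpha} and all its derivatives converge absolutely and uniformly for $|x_2|\geq\rho$, so that this part has a smooth kernel and is trivially bounded into $H^2_\alpha(\Omega_R)$ for every $R$. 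You instead let the periodization width depend on $R$: with $r>R+\rho$ the identity $\K_r(x-y)=G_{k,\alpha}(x-y)$ holds for all $x\in\Omega_R$, $y\in D$, hence $V_kf=K_rf_0$ on all of $\Omega_R$, and no cutoff or far-field smoothness argument is needed. Your supporting points are sound: Theorem~\ref{th:KHat} and Proposition~\ref{th:boundKh} are insensitive to the half-width parameter (the non-resonance condition $k^2\neq\alpha_j^2$ ensures $j_2\neq0$ whenever $\lambda_j=0$, for any half-width --- the paper itself uses this implicitly by invoking $K_{2\rho}$), and the bi-periodic $H^2_{\loc}(\R^2)$-extension of $K_rf_0$ is a legitimate witness for membership in $H^2_\alpha(\Omega_R)$. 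What your variant buys is economy --- a single identification plus restriction, with constants depending on $R$ through $r$, which the lemma permits; what the paper's variant buys is a fixed, $R$-independent periodized operator together with a conceptually clean split into a strongly singular near-field part and a smoothing far-field part.
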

\begin{proof}
  Consider $\chi \in C^\infty(\Omega)$ such that
  $\chi = 1$ in $D$, $0 \leq \chi \leq 1$ in
  $\Omega_\rho \setminus \ol{D}$ and $\chi(x) = 0$ for
  $|x_2|>\rho$. Then
  $V_k g = \chi V_k g + (1-\chi)V_k g$.
  Note that $(1-\chi)V_k g =
    \int_{D} (1-\chi) G(\cdot - y) g(y)\d{y}$
  is an integral operator with a smooth kernel,
  since the series in~\eqref{eq:GkAlpha} converges
  absolutely and uniformly for $|x_2| \geq \rho >0$,
  as well as all its partial derivatives. In 
  consequence, the integral operator $(1-\chi)V_k$ is
  bounded from $L^2(D)$ into $H^2_\alpha(\Omega_{\RR})$, 
  since
  \[
   \| \partial_1^{\beta_1} \partial_2^{\beta_2}
     ((1-\chi)V_k g)\|^2_{L^2(\Omega_{\RR})}
   \leq \int_{\Omega_{\RR}} \int_{D}
     | \partial_1^{\beta_1} \partial_2^{\beta_2}
     [(1-\chi(x))G_{k,\alpha}(x-y)]|^2 \d{y}\d{x} \
     \|g\|^2_{L^2(D)} 
  \]
  for all $\beta_{1,2} \in \N$ such that
  $\beta_1 + \beta_2 \leq 2$.
  It remains to show the boundedness
  of $\chi V_k$ from $L^2(D)$ into
  $H^2(\Omega_{\rho})$. Let $g \in L^2(D)$
  and consider the operator $K_{2\rho}$
  from Proposition~\ref{th:boundKh},
  mapping $L^2(\Omega_{2\rho})$ into
  $H^2_{\per}(\Omega_{2\rho}) \subset H^2_{\alpha}(\Omega_{2\rho})$,
  \[
    (K_{2\rho} g)(x) = \int_{D} \K_{2\rho}(x-y)g(y)\d{y}
    \quad \text{for } x \in \Omega_{2\rho}.
  \]
  If $x \in \Omega_\rho$, then $|x_2-y_2| \leq 2\rho$,
  that is, $\K_{2\rho}(x-y) = G_{k,\alpha}(x-y)$. Hence, 
  $K_{2\rho} g = V_k g$ in $\Omega_\rho$, and hence
  $\chi K_{2\rho} g = \chi V_k g$ in $\Omega_{\rho}$.
  Since $\chi$ is a smooth function, we conclude
  that $\chi V_k$ is bounded from $L^2(D)$
  into $H^2_\alpha(\Omega_{\rho})$.
\end{proof}

Note that the potential $V_k f$ can be extended to
an $\alpha$-quasi-periodic function in $H^2_{\loc}(\R^2)$,
due to the $\alpha$-quasi-periodicity of the kernel.

\begin{lemma}
  \label{th:GreenSolution}
  For $g \in L^2(D,\C^2)$ the potential
  $w = \div V_k g$ belongs to $H^1_\alpha(\Omega_\rho)$
  for all $\rho>0$. It is the unique
  radiating weak solution to $\Delta w + k^2 w = -\div g$
  in $\Omega$, that is, it satisfies 
  \begin{equation}
    \label{eq:pde}
    \int_{\Omega} (\nabla w\cdot\nabla \ol{v} - k^2 w\ol{v})\d{x}
    = -\int_{D} g\cdot\nabla\ol{v} \d{x}
  \end{equation}
  for all $v \in H^1_\alpha(\Omega)$ with compact
  support, and additionally the 
  Rayleigh expansion condition~\eqref{eq:RayleighCondition}.
\end{lemma}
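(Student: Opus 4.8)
The plan is to reduce the vector potential to its two scalar components, exploit the $H^2$-regularity furnished by Lemma~\ref{th:H2smoothness}, derive the variational identity by a double integration by parts against smooth test functions, and finally read off the radiation condition and uniqueness from the series representation~\eqref{eq:GkAlpha} of the Green's function. First I would write $w = \div V_k g = \partial_1 u_1 + \partial_2 u_2$ with the scalar potentials $u_i := V_k g_i$, $i=1,2$. By Lemma~\ref{th:H2smoothness} each $u_i$ belongs to $H^2_\alpha(\Omega_\RR)$ for every $\RR>0$, so $\partial_i u_i \in H^1_\alpha(\Omega_\RR)$ and hence $w \in H^1_\alpha(\Omega_\rho)$ for all $\rho>0$, which settles the regularity claim. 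Moreover, since $G_{k,\alpha}(\cdot - y)$ is the $\alpha$-quasi-periodic Green's function, each scalar potential is the radiating weak solution to $\Delta u_i + k^2 u_i = -g_i$ (with $g_i$ extended by zero outside $D$); this standard mapping property can be justified through the splitting $G_{k,\alpha} = (\i/4)H^{(1)}_0(k|\cdot|) + \Psi$ of Lemma~\ref{th:decomposeGreen}, whose first summand is the free-space fundamental solution and whose second summand solves the homogeneous Helmholtz equation. Concretely, each $u_i$ satisfies
\[
  \int_\Omega (\nabla u_i \cdot \nabla \ol\phi - k^2 u_i \ol\phi)\d{x} = \int_D g_i \ol\phi \d{x}
\]
for all $\phi \in H^1_\alpha(\Omega)$ with compact support.

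To obtain~\eqref{eq:pde} for $w$, I would take smooth, compactly supported, $\alpha$-quasi-periodic test functions $v$, insert the admissible test function $\phi = \partial_i v$ into the scalar weak equation, and sum over $i=1,2$; the right-hand side becomes $\int_D g\cdot\nabla\ol v\d{x}$. On the left I would transfer derivatives: in the leading term $\sum_{i,j}\int_\Omega \partial_j u_i\,\partial_i\partial_j\ol v\d{x}$ I move $\partial_i$ back onto $\partial_j u_i$ (legitimate because $u_i \in H^2$), which reconstructs $-\int_\Omega \nabla w\cdot\nabla\ol v\d{x}$; in the zeroth-order term $-k^2\sum_i\int_\Omega u_i\,\partial_i\ol v\d{x}$ I move $\partial_i$ onto $u_i$, producing $k^2\int_\Omega w\ol v\d{x}$. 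All boundary contributions at $x_1=\pm\pi$ cancel by quasi-periodicity, and those at $x_2=\pm\infty$ vanish by compact support. Combining the two terms yields exactly~\eqref{eq:pde} for smooth $v$; since both sides are bounded antilinear functionals of $v$ on $H^1_\alpha(\Omega_\RR)$ (using $w\in H^1_\alpha(\Omega_\RR)$ and $g\in L^2(D)$), the identity extends to all compactly supported $v\in H^1_\alpha(\Omega)$ by density of smooth quasi-periodic functions.

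For the radiation condition I would observe that, for $x_2>\rho \geq y_2$ (and symmetrically below), the series~\eqref{eq:GkAlpha} gives $G_{k,\alpha}(x-y) = (\i/4\pi)\sum_{j} \beta_j^{-1} e^{\i\alpha_j(x_1-y_1)+\i\beta_j(x_2-y_2)}$, so each $u_i$, and therefore $w=\div V_k g$ (differentiation sends each mode $e^{\i\alpha_j x_1+\i\beta_j x_2}$ to a multiple of itself), admits a Rayleigh expansion~\eqref{eq:RayleighCondition}. Uniqueness then follows by linearity: the difference $w$ of two radiating weak solutions solves $\Delta w + k^2 w = 0$ in the whole strip $\Omega$, hence is smooth there; the Fourier ansatz $w(x)=\sum_j w_j(x_2)e^{\i\alpha_j x_1}$ reduces the equation to $w_j''+\beta_j^2 w_j=0$ with general solution a combination of $e^{\pm\i\beta_j x_2}$, and the radiation condition forces the coefficient of $e^{-\i\beta_j x_2}$ to vanish (from $x_2>\rho$) and that of $e^{+\i\beta_j x_2}$ to vanish (from $x_2<-\rho$). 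Since $\beta_j\neq0$ under the assumption $k^2\neq\alpha_j^2$, this gives $w\equiv0$.

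The main obstacle is the middle step: carefully distributing the available $H^2$-regularity of the potentials over the two integrations by parts while the test function is only $H^1$, and justifying the density passage so that~\eqref{eq:pde} holds on the full test space; everything else is either a direct consequence of Lemma~\ref{th:H2smoothness} or a routine Fourier computation.
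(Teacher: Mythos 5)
Your proposal is correct and follows essentially the same route as the paper: componentwise $H^2$-regularity from Lemma~\ref{th:H2smoothness}, testing the scalar weak equations for $p_i = V_k g_i$ with $\partial_i v$ and integrating by parts to obtain~\eqref{eq:pde} for smooth quasi-periodic test functions, then termwise differentiation of the Rayleigh series for the radiation condition. The only minor deviation is the uniqueness step, where the paper invokes a unique continuation argument while you give an explicit Fourier/ODE separation argument in the strip (forcing both exponential coefficients of each mode to vanish); both are valid, and yours is self-contained here because the homogeneous equation has constant coefficients in all of $\Omega$ and $\beta_j \neq 0$ under the standing assumption $k^2 \neq \alpha_j^2$.
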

\begin{proof}
  Lemma~\ref{th:H2smoothness} and $\alpha$-quasi-periodicity of
  the kernel of $V_k$ imply that $w$ is a function in
  $H^1_\alpha(\Omega_\rho)$ for all $\rho>0$.
  It is sufficient to prove~\eqref{eq:pde} 
  for all smooth $\alpha$-quasi-periodic test functions
  $v$ that are supported in $\{ |x_2| < C \}$ for 
  some $C>0$ depending on $v$. 
  It is well-known that $p=V_kg \in H^2_\alpha(\Omega)$ 
  is a weak solution to the Helmholtz equation, that is, 
  \[
    \int_{\Omega} (\nabla p_j \cdot \nabla \partial_j \ol{v} - k^2 p_j \partial_j \ol{v})\d{x}
    = -\int_{D} g_j \partial_j\ol{v} \d{x}
  \]
  for $j=1,2$. An integration by parts shows that 
  \[
    \int_{\Omega} (\nabla \div p \, \cdot \nabla \ol{v} - k^2 \div p \, \ol{v})\d{x}
    = -\int_{D} g \cdot \nabla \ol{v} \d{x},
  \]
  which implies~\eqref{eq:pde} due to $\div p = w$. 
  Since the components of the potential $p=V_kg$ satisfy 
  the Rayleigh condition, a simple computation shows that 
  the divergence $w = \div p$ does also satisfy the latter condition.
  Uniqueness of a radiating solution to~\eqref{eq:pde} when $g=0$ 
  follows from the Rayleigh expansion condition and a unique 
  continuation argument. 
%
\end{proof}

Returning to the differential equation~\eqref{eq:HmodeEquation}
for the scattered field $u^s$, let us set $f=Q\nabla u^i$. (Recall that 
$Q = \epsr^{-1} - I_2$.) The variational
formulation of~\eqref{eq:HmodeEquation} is
\begin{equation}
  \label{eq:variationalForm}
  \int_{\Omega} (\nabla u^s \cdot \nabla\ol{v} - k^2 u^s \ol{v}) \d{x}
  = -\int_{D} (Q \nabla u^s + f) \cdot \nabla \ol{v} \d{x}
\end{equation}
for all $v \in H^1_\alpha(\Omega)$ with compact
support in $\ol{\Omega}$. 
From Lemma~\ref{th:GreenSolution} we know that
the radiating solution to this problem is given by
$u^s = \div V_k(Q\nabla u^s + f)$. Hence, we aim to find 
$u^s: \, \Omega \to \C$ that belongs to $H^1_{\alpha}(\Omega_{\RR})$
for all $\RR>0$, such that 
\begin{equation}
  \label{eq:lippmann}
  u^s - \div V_k(Q \nabla u^s) = \div V_k(f) \qquad \text{in } \Omega.
\end{equation}

\section{G\r{a}rding Inequalities in Weighted Sobolev Spaces}
\label{se:anisotropic}

For scattering problems in free space and for scalar and positive contrast, 
the paper~\cite{Kirsc2009} investigates integral equations similar 
to~\eqref{eq:lippmann} in weighted spaces. In this section we generalize 
the results from~\cite{Kirsc2009} to anisotropic and possibly sign-changing 
coefficients in a periodic setting, proving a G\r{a}rding 
inequality for $I-\div V_k(Q\nabla\cdot)$ in a anisotropically weighted $\alpha$-quasi-periodic
$H^1$-space.

From~\eqref{eq:lippmann} it is obvious that the knowledge of 
$u$ in $D$ is sufficient to determine $u$ in $\Omega \setminus \ol{D}$ 
by integration. Thus, we define the operator $L_k: \, f \mapsto \div V_k f$ 
that is bounded from $L^2(D,\C^2)$ into $H^1_\alpha(D)$ and consider the 
integral equation
\begin{equation}
  \label{eq:lippmannD}
  u = L_k(Q \nabla u + f) \quad \text{in } H^1_\alpha(D).
\end{equation}
To study G\r{a}rding inequalities for volume
integral equations, we introduce suitable weighted
Sobolev spaces. 
To this end, we recall that the symmetric $2 \times 2$ matrix $\Re(Q)$ 
has pointwise almost everywhere in $D$ an eigenvalue decomposition $\Re(Q) = U^\ast \Sigma U$ 
with a diagonal matrix $\Sigma$ and an orthogonal matrix $U$.  This decomposition 
can be used to define the absolute value $| \Re(Q) | = U^\ast |\Sigma| U$ and the 
square root $| \Re(Q) |^{1/2} = U |\Sigma|^{1/2} U^\ast$, where the absolute value 
and the square root are element-wise applied to the diagonal matrix $\Sigma$. 
The two eigenvalues $\lambda_{1,2}$ of $\Re(Q)$ define
\begin{equation}
  \label{eq:lambdaMinMax}
  \lambda_{\min}(x) = \min \{ |\lambda_{1}(x)|, \, |\lambda_{2}(x)| \}, \quad 
  \lambda_{\max}(x) = \max \{ |\lambda_{1}(x)|, \, |\lambda_{2}(x)| \}, \quad x \in D.
\end{equation}
We assume in the following that $\Re(Q)$ is pointwise either strictly positive or strictly negative 
definite, such that we can assign a sign function $\sign(\Re(Q)) \in L^\infty(\Omega)$ to $\Re(Q)$, 
indicating whether the eigenvalues of $\Re(Q)$ are positive or negative. In the sequel, we write 
$\Re(Q) \geq c$ in $D$ ($\Re(Q) \leq c$ in $D$) to indicate that the eigenvalues 
$\lambda_{1,2}$ are larger than or equal to (less than or equal to) a constant $c$, 
almost everywhere in $D$. Note that the spectral matrix norm is denoted by $|\cdot |_2$.

We denote by $H^1_{\alpha,Q}(D)$ the
completion of $H^1_\alpha(D)$ with respect to the norm
$\Vert \cdot \Vert_{H^1_{\alpha,Q}(D)}$,
\begin{equation}
  \label{eq:weightedNorm}
  \Vert u \Vert_{H^1_{\alpha,Q}(D)}^2 := \| \sqrt{|\Re(Q)|} \nabla u \|_{L^2(D,\C^2)}^2 + \| u \|_{L^2(D)}^2.
\end{equation}
Since we assumed that $\supp (\Re(Q)) = \ol{D}$, this 
norm is non-degenerate. 
Moreover, $\Vert u \Vert_{H^1_{\alpha,Q}(D)}$ is an equivalent
norm in $H^1_\alpha(D)$ provided that $|\Re(Q)|$ is bounded 
from below in $D$ by some positive constant. In general, 
$\Vert u \Vert_{H^1_{\alpha,Q}(D)} 
  \leq (1+\| | \sqrt{|\Re(Q)|} |_2 \|_{L^\infty(D)}) \, \Vert u \Vert_{H^1_\alpha(D)}$.
Note also that the norm of $H^1_{\alpha,Q}(D)$ is linked to the sesquilinear form
\begin{equation}
  \label{eq:aq}
  a_Q(u,v) = \int_D \big[ \sign(\Re(Q)) Q \nabla u \cdot \nabla \ol{v} + u\ol{v} \big]\d{x},
 \qquad u,v \in H^1_{\alpha,Q}(D).
\end{equation}
Indeed, $\Vert u \Vert_{H^1_{\alpha,Q}(D)}^2 = \Re \left[ a_Q(u,u) \right]$
for $u \in H^1_{\alpha,Q}(D)$.  In consequence, 
the form $a_Q$ is non-degenerate, that is, if $a_Q(u,v) = 0$ for all 
$v \in H^1_{\alpha,Q}(D)$, then $u = 0$.

If $\Im Q$ vanishes in $D$ (that is, the values of 
$x \mapsto Q(x)$ are self-adjoint matrices), then $a_Q$ is simply the inner 
product associated with the norm of $H^1_{\alpha,Q}(D)$,
\[
  \left\langle u, \, v \right\rangle_{H^1_{\alpha,Q}(D)}
  = \int_D \big[ |Q| \nabla u \cdot \nabla \ol{v} + u\ol{v}\big]\d{x},
  \qquad u,v \in H^1_{\alpha,Q}(D).
\]

\begin{lemma}
  \label{th:bounded}
  Assume that there exists $C>0$ such that 
  \begin{equation}
    \label{eq;boundIm}
    |\Im (Q(x)) \xi | \leq C |\Re (Q(x)) \xi | \quad \text{ for almost every 
  $x \in D$ and all $\xi \in \C^2$.}
  \end{equation}
  Then $v \mapsto L_k(Q\nabla v)$ is bounded on $H^1_{\alpha,Q}(D)$.
\end{lemma}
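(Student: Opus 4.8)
The plan is to factor the map $v \mapsto L_k(Q\nabla v)$ through the intermediate space $L^2(D,\C^2)$. Since $L_k$ is bounded from $L^2(D,\C^2)$ into $H^1_\alpha(D)$, and the weighted norm is dominated by the unweighted one via the estimate $\Vert u \Vert_{H^1_{\alpha,Q}(D)} \leq (1+\| \,| |\Re(Q)|^{1/2} |_2 \|_{L^\infty(D)}) \, \Vert u \Vert_{H^1_\alpha(D)}$ recorded above, it suffices to show that the gradient-and-multiplication operator $v \mapsto Q\nabla v$ is bounded from $H^1_{\alpha,Q}(D)$ into $L^2(D,\C^2)$. Once this is in hand, for $v$ in the dense subspace $H^1_\alpha(D)$ one simply chains the three bounds to obtain $\|L_k(Q\nabla v)\|_{H^1_{\alpha,Q}(D)} \leq C \, \|v\|_{H^1_{\alpha,Q}(D)}$, and the full statement follows by continuous extension.

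The heart of the argument is a pointwise estimate of $|Q\nabla v|$ by $| |\Re(Q)|^{1/2}\nabla v|$, which is precisely the quantity that the weighted norm controls. Writing $Q = \Re(Q) + \i \Im(Q)$, I would first treat the self-adjoint part. Since $\Re(Q)$ is pointwise strictly definite, $\sign(\Re(Q))$ is a scalar $\pm 1$ and $\Re(Q) = \sign(\Re(Q))\,|\Re(Q)| = \sign(\Re(Q))\,|\Re(Q)|^{1/2}\big(|\Re(Q)|^{1/2}\,\cdot\,\big)$, whence
\[
  |\Re(Q)\nabla v| \;\leq\; \big|\,|\Re(Q)|^{1/2}\big|_2 \; \big|\,|\Re(Q)|^{1/2}\nabla v\big| \qquad \text{a.e. in } D,
\]
where $\big|\,|\Re(Q)|^{1/2}\big|_2 = \lambda_{\max}^{1/2}$ lies in $L^\infty(D)$ because $Q \in L^\infty$. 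For the non-self-adjoint part I invoke the hypothesis~\eqref{eq;boundIm}, which gives $|\Im(Q)\nabla v| \leq C\,|\Re(Q)\nabla v|$ pointwise, so $\Im(Q)\nabla v$ obeys the same bound. Integrating over $D$ then yields $\|Q\nabla v\|_{L^2(D,\C^2)} \leq C\, \| \,|\Re(Q)|^{1/2}\nabla v \|_{L^2(D,\C^2)} \leq C\, \|v\|_{H^1_{\alpha,Q}(D)}$.

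The delicate point I anticipate is bookkeeping, not computation. A generic element of $H^1_{\alpha,Q}(D)$ need not possess an $L^2$-gradient, since the weight $|\Re(Q)|^{1/2}$ may degenerate toward $\partial D$; thus $Q\nabla v$ is a priori only defined on the dense subspace $H^1_\alpha(D)$. The pointwise estimate above shows that $Q\nabla$ is bounded there for the weighted norm, so it extends uniquely and continuously to all of $H^1_{\alpha,Q}(D)$ with values in $L^2(D,\C^2)$, and this is the definition under which $L_k(Q\nabla\,\cdot\,)$ is to be understood. Verifying that assumption~\eqref{eq;boundIm} is exactly what renders the imaginary part controllable by the weighted norm—which only ``sees'' $\Re(Q)$—is the conceptual crux; everything else is a direct composition of bounded maps.
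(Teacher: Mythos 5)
Your proof is correct and follows essentially the same route as the paper's: factor $v \mapsto L_k(Q\nabla v)$ as the composition $H^1_{\alpha,Q}(D) \xrightarrow{Q\nabla} L^2(D,\C^2) \xrightarrow{L_k} H^1_\alpha(D) \hookrightarrow H^1_{\alpha,Q}(D)$, controlling $\Re(Q)\nabla v$ via the factorization through $|\Re(Q)|^{1/2}$ and the imaginary part via hypothesis~\eqref{eq;boundIm}, exactly as in the paper's estimate~\eqref{eq:imbedding2}. Your extra remark on defining $Q\nabla$ by continuous extension from the dense subspace $H^1_\alpha(D)$ is a welcome clarification of a point the paper leaves implicit, but it does not change the substance of the argument.
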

\begin{proof}
  Due to Theorem~\ref{th:H2smoothness}, $L_k$ is bounded from
  $L^2(D,\C^2)$ into $H^1_\alpha(D)$. Furthermore,  $v\mapsto Q\nabla v$
  is bounded from $H^1_{\alpha,Q}(D)$ into $L^2(D,\C^2)$, since
  \begin{equation}
    \label{eq:imbedding2}
    \begin{split}
    \| Q \nabla u \|_{L^2(D, \C^2)}
    & \leq \| \Re (Q)  \nabla u \|_{L^2(D, \C^2)} + \| \Im (Q)  \nabla u \|_{L^2(D, \C^2)} \\
    & \leq \| |\Re (Q)|  \nabla u \|_{L^2(D, \C^2)} + C \| \Re (Q)  \nabla u \|_{L^2(D, \C^2)}\\
    & \leq (1+C)\| |\sqrt{| \Re(Q) |} |_2 \|_{L^\infty(D)} \| u \|_{H^1_{\alpha,Q}(D)}.
    \end{split}    
  \end{equation}
  Moreover, the imbedding $H^1_\alpha(D)\subset H^1_{\alpha,Q}(D)$ is
  bounded, as mentioned above.
  Hence, $v \mapsto L_k(Q\nabla v)$ is bounded on $H^1_{\alpha,Q}(D)$.
\end{proof}

\begin{remark}
  Condition~\eqref{eq;boundIm} is satisfied if the absolute values of the 
  eigenvalues of $\Im Q$ are pointwise bounded by $C \lambda_{\min}$
  (recall from~\eqref{eq:lambdaMinMax} that $\lambda_{\min}$ is the 
  minimum of the absolute values of the eigenvalues of $\Re(Q)$).
\end{remark}

If $u \in H^1_\alpha(D) \subset H^1_{\alpha,Q}(D)$ solves the
Lippmann-Schwinger equation~\eqref{eq:lippmannD},
then Lemma~\eqref{th:bounded} implies that $u$ solves
the same equation in $H^1_{\alpha,Q}(D)$. Since $a_Q$ is
non-degenerate, solving the Lippmann-Schwinger equation
in $H^1_{\alpha,Q}(D)$ is equivalent to solve
\begin{equation}
  \label{eq:lippmannVariational}
  a_Q(u - L_k(Q \nabla u + f), \, v) = 0
  \qquad \text{for all } v \in H^1_{\alpha,Q}(D).
\end{equation}
If $u \in H^1_{\alpha,Q}(D)$ solves the latter variational problem
for some $f \in L^2(D,\C^2)$, then $u = L_k(Q \nabla u+f)$
belongs to $H^1_\alpha(D)$, due to~\eqref{eq:imbedding2} and
since $L_k$ is bounded from $L^2(D, \C^2)$ into $H^1_\alpha(D)$.

\begin{proposition}
  \label{th:equivalenceWeight-NoWeight}
  Assume that $f \in L^2(D,\C^2)$.
  Then any solution to the Lippmann-Schwinger
  equation~\eqref{eq:lippmannD} in $H^1_\alpha(D)$ is
  a solution in $H^1_{\alpha,Q}(D)$ and vice versa.
\end{proposition}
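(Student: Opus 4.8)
The plan is to read off both implications from the structural facts already assembled just before the statement, since each direction reduces to a boundedness property of $L_k$ together with the continuous embedding $H^1_\alpha(D) \subset H^1_{\alpha,Q}(D)$.

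First I would dispose of the easy inclusion. If $u \in H^1_\alpha(D)$ solves \eqref{eq:lippmannD}, then the identity $u = L_k(Q\nabla u + f)$ holds in $H^1_\alpha(D)$, hence also in $H^1_{\alpha,Q}(D)$ because the embedding of $H^1_\alpha(D)$ into the weighted space is continuous. By Lemma~\ref{th:bounded} the map $v \mapsto L_k(Q\nabla v)$ is well-defined and bounded on $H^1_{\alpha,Q}(D)$, so both sides of \eqref{eq:lippmannD} are genuine elements of the weighted space and $u$ solves the equation there; equivalently, by non-degeneracy of $a_Q$, it satisfies the variational form \eqref{eq:lippmannVariational}.

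For the reverse inclusion---the only direction requiring a (mild) regularity bootstrap---I would start from a solution $u \in H^1_{\alpha,Q}(D)$ of \eqref{eq:lippmannD}, equivalently of \eqref{eq:lippmannVariational}. The key observation is that $v \mapsto Q\nabla v$ maps $H^1_{\alpha,Q}(D)$ into $L^2(D,\C^2)$, as quantified by estimate \eqref{eq:imbedding2}. Hence $Q\nabla u + f \in L^2(D,\C^2)$, using $f \in L^2(D,\C^2)$ by assumption. Since $L_k$ is bounded from $L^2(D,\C^2)$ into $H^1_\alpha(D)$ (a consequence of Lemma~\ref{th:H2smoothness}), the right-hand side $L_k(Q\nabla u + f)$ automatically lies in the unweighted space $H^1_\alpha(D)$. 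As $u$ equals this right-hand side, we conclude $u \in H^1_\alpha(D)$, so $u$ solves \eqref{eq:lippmannD} in $H^1_\alpha(D)$ as well.

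The main subtlety---and the reason the statement is not wholly tautological---is that $H^1_{\alpha,Q}(D)$ is defined as an abstract completion and may strictly contain $H^1_\alpha(D)$ when $|\Re(Q)|$ degenerates toward the boundary of $D$, so the solution concept in the weighted space must be read through the non-degenerate form $a_Q$ rather than as a naive pointwise identity. What closes the argument is precisely the smoothing property of $L_k$: regardless of how weak the a priori regularity of $u$ is in the weighted space, the volume potential returns its argument to $H^1_\alpha(D)$, and the fixed-point structure of \eqref{eq:lippmannD} then forces any weighted solution back into the unweighted space.
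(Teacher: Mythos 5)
Your proof is correct and follows essentially the same route as the paper: the forward direction via the continuous embedding $H^1_\alpha(D) \subset H^1_{\alpha,Q}(D)$ and Lemma~\ref{th:bounded}, and the reverse direction by combining estimate~\eqref{eq:imbedding2} (so that $Q\nabla u + f \in L^2(D,\C^2)$) with the boundedness of $L_k$ from $L^2(D,\C^2)$ into $H^1_\alpha(D)$, which the fixed-point identity then converts into the regularity bootstrap. Your closing remark about reading the weighted solution concept through the non-degenerate form $a_Q$ matches the paper's use of~\eqref{eq:lippmannVariational} exactly.
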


Our aim is now to prove a (generalized) G\r{a}rding
inequality for the variational
problem~\eqref{eq:lippmannVariational}. 
The following lemma will turn out to be useful.

\begin{lemma}
\label{th:Compact1}
Suppose that $X$ and $Y$ are Hilbert spaces. Let $T_{1,2}$ 
be bounded linear operators from $X$ into $Y$ and
consider the sesquilinear form $a: \, X\times X \rightarrow \C$, 
defined by $a(u,v) = \langle T_1 u, T_2 v\rangle_Y$ for $u,v \in X$.
If one of the operators $T_1$ and $T_2$ is compact, then
the linear operator $A:\, X\rightarrow X$, defined by 
$\langle Au,v\rangle_X = a(u,v)$ for all $u,v \in X$, is compact, too.
\end{lemma}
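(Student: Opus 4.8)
The plan is to exploit the factorization of the sesquilinear form $a(u,v) = \langle T_1 u, T_2 v\rangle_Y$ and recall the defining property of the representing operator $A$, namely $\langle Au, v\rangle_X = a(u,v) = \langle T_1 u, T_2 v\rangle_Y$ for all $u,v \in X$. The key observation is that this identity lets me rewrite $A$ explicitly in terms of $T_1$, $T_2$, and their adjoints. Indeed, $\langle T_1 u, T_2 v\rangle_Y = \langle T_2^\ast T_1 u, v\rangle_X$ by definition of the Hilbert-space adjoint $T_2^\ast: Y \to X$, so that $\langle Au, v\rangle_X = \langle T_2^\ast T_1 u, v\rangle_X$ for all $v \in X$. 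Since this holds for every $v$, non-degeneracy of the inner product on $X$ forces the clean identity $A = T_2^\ast T_1$.

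Once $A = T_2^\ast T_1$ is established, the compactness claim is essentially immediate from the standard ideal property of compact operators. First I would recall that the adjoint of a bounded operator is bounded, and in particular that the adjoint of a compact operator is again compact (Schauder's theorem); more to the point, the composition of a compact operator with a bounded operator (on either side) is compact. I then distinguish the two cases of the hypothesis. If $T_1$ is compact, then $A = T_2^\ast T_1$ is a bounded operator $T_2^\ast$ composed with a compact operator $T_1$, hence compact. If instead $T_2$ is compact, then $T_2^\ast$ is compact by Schauder's theorem, and $A = T_2^\ast T_1$ is a compact operator $T_2^\ast$ composed with the bounded operator $T_1$, hence again compact. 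Either way $A$ is compact.

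There is essentially no serious obstacle here; the statement is a soft functional-analytic lemma. The only point requiring a little care is the justification that $A$ is well-defined as a bounded linear operator in the first place: the map $v \mapsto a(u,v)$ is a bounded conjugate-linear (antilinear) functional on $X$ for each fixed $u$, so the Riesz representation theorem produces a unique $Au \in X$ with $\langle Au, v\rangle_X = a(u,v)$, and boundedness of $A$ follows from boundedness of $T_1$ and $T_2$. Strictly speaking one should also fix a convention for which argument of the inner product is antilinear so that the adjoint manipulation $\langle T_1 u, T_2 v\rangle_Y = \langle T_2^\ast T_1 u, v\rangle_X$ reads correctly, but this is purely bookkeeping and does not affect the conclusion. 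Having reduced the lemma to the identity $A = T_2^\ast T_1$, I regard the proof as complete.
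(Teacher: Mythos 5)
Your proof is correct, and it takes a cleaner, more algebraic route than the paper's. You unify both cases through the single identity $A = T_2^\ast T_1$ (which is valid: for $T_2 \colon X \to Y$ one has $\langle T_1 u, T_2 v\rangle_Y = \langle T_2^\ast T_1 u, v\rangle_X$) and then invoke the two-sided ideal property of compact operators together with Schauder's theorem for the case where $T_2$ is compact. The paper, by contrast, handles the case of compact $T_1$ by a direct sequential argument: from $\|Au\|_X = \sup_{0 \neq v} |a(u,v)|/\|v\|_X \leq C \|T_1 u\|_Y$ it deduces that any weakly null sequence $\{u_n\}$ yields a subsequence with $\|A u_n\| \to 0$, which is the sequential characterization of compactness; only for the case of compact $T_2$ does it fall back on the factorization $a(u,v) = \langle T_2^\ast T_1 u, v\rangle_X$, essentially as you do. What your version buys is brevity and symmetry between the two cases, at the cost of citing Schauder's theorem; the paper's version is more self-contained for the $T_1$ case, needing only the weak-to-strong mapping property of compact operators and no adjoint machinery. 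Your care about well-definedness of $A$ via the Riesz representation theorem (applied to the bounded conjugate-linear functional $v \mapsto a(u,v)$) matches what the paper dismisses as ``easily seen,'' so nothing is missing.
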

\begin{proof}
It is easily seen that $A$ is a well-defined bounded linear operator.
Obviously,
$|\langle Au,v\rangle_X| = |a(u,v)| \leq C\|T_1 u\|_Y \|T_2 v\|_Y$
for $u,v \in X$. Assume that $T_1$ is compact, and note that  
\[
  \|A u\|_X = \sup_{0\neq v\in X}\frac{|\langle Au,v\rangle_{X}|}{\|v\|_{X}}\leq C\|T_1 u\|_Y.
\]
If a sequence $\{ u_n \}$ converges weakly to zero in $X$,
then $\{ T_1 u_n \}$ contains a strongly convergent subsequence 
tending to zero in $Y$. 
Consequently, $\{ A u_n \}$ also contains a strongly convergent zero 
sequence, which means that $A$ is compact. One can 
analogously derive the compactness of $T$ in case that $T_2$ 
is compact, since $a(u,v) = \langle T_2^\ast T_1 u,v\rangle$.
\end{proof}

The next lemma proves G\r{a}rding inequalities for the
operator $v \mapsto v-L_k(Q \nabla v)$ using the
sesquilinear form $a_Q$ from~\eqref{eq:aq}.
The second part of the claim uses a periodic extension operator
\[
  E: \, H^1_{\alpha}(D) \to H^1_{\alpha}(\Omega),
  \quad \left. E(u) \right|_{D} = u, 
  \quad \left. E(u) \right|_{\Omega \setminus \Omega_{2\rho}} = 0, 
\]
introduced in Appendix~\ref{se:extension}. 
The operator norm of $E$ is
\[
  \| E \|_{H^1_\alpha(D) \to H^1_\alpha(\Omega_{2\rho})}
  = \left(1 + \| E \|_{H^1_\alpha(D) \to H^1_\alpha(\Omega_{2\rho}\setminus \ol{D})}^2 \right)^{1/2}.
\]

\begin{theorem}
  \label{l-coerc_2}
  Assume that $D$ is a Lipschitz domain and that $Q \in L^\infty(D, \C^{2 \times 2})$. 

  (a) If $\Re(Q) > 0$ in $D$, then there
  exists a compact operator $K_+$ on $H^1_{\alpha,Q}(D)$ such that
  \begin{equation}
    \label{eq:coerc_2}
    \Re \left[ a_Q( v-L_k(Q\nabla v), \, v ) \right] \geq
    \Vert v\Vert^2_{H^1_{\alpha,Q}(D)} - \Re\langle K_+v, \, v\rangle_{H^1_{\alpha,Q}(D)},
    \qquad v\in H^1_{\alpha,Q}(D).
  \end{equation}

  (b) If $\Re(Q)<-1$, and if
  \begin{equation}
    \label{eq:condition}
    \| E \|_{H^1_{\alpha}(D) \to H^1_{\alpha}(\Omega_{2\rho})} 
    <  \inf_D | \Re(Q) |^{1/2}_2, 
  \end{equation}
  then there exists a constant $C>0$ and a 
  compact operator $K_-$ on $H^1_{\alpha,Q}(D)$ such that
  \begin{equation}
    \label{eq:coerc_3}
    - \Re \left[ a_Q( v-L_k(Q\nabla v), \, v ) \right] 
    \geq C \Vert v\Vert^2_{H^1_{\alpha,Q}(D)}
    - \Re\langle K_-v, \, v\rangle_{H^1_{\alpha,Q}(D)},
    \qquad v\in H^1_{\alpha,Q}(D). 
  \end{equation}
\end{theorem}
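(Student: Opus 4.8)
The plan is to expand $a_Q\!\big(v-L_k(Q\nabla v),v\big)=a_Q(v,v)-a_Q\!\big(L_k(Q\nabla v),v\big)$ and to invoke the identity $\Re a_Q(v,v)=\|v\|^2_{H^1_{\alpha,Q}(D)}$ recorded above, so that only the cross term $a_Q(w,v)$, with $w:=L_k(Q\nabla v)=\div V_k(Q\nabla v)$, must be analysed. Writing $p:=V_k(Q\nabla v)$, Lemma~\ref{th:H2smoothness} gives $p\in H^2_\alpha(\Omega_{2\rho})$, and the Helmholtz equation $(\Delta+k^2)p=-Q\nabla v$ of Lemma~\ref{th:GreenSolution}, together with the vector-Laplacian identity $\nabla\div=\Delta+\curl^*\curl$ (with $\curl^*\phi=(\partial_2\phi,-\partial_1\phi)^\top$ the formal adjoint of $\curl$), yields the exact decomposition $\nabla w=-Q\nabla v-k^2p+\curl^*\curl p$ in $L^2(D,\C^2)$. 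The point of this identity is that it never differentiates the merely bounded coefficient $Q$, so it stays valid for discontinuous gratings.

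Inserting it into $a_Q(w,v)=\int_D\sign(\Re Q)\,Q\nabla w\cdot\nabla\bar v\d{x}+\int_D w\bar v\d{x}$ splits the form into four parts. The parts $-k^2\int_D\sign(\Re Q)\,Q\,p\cdot\nabla\bar v$ and $\int_D w\bar v$ are compact on $H^1_{\alpha,Q}(D)$, because $v\mapsto p$ and $v\mapsto w$ both factor through the smoothing $V_k\colon L^2(D)\to H^2$ of Lemma~\ref{th:H2smoothness} followed by Rellich's theorem, so that Lemma~\ref{th:Compact1} turns them into compact operators to be absorbed into $K_\pm$. What remains are the two strongly singular parts $-\int_D\sign(\Re Q)\,Q\,(Q\nabla v)\cdot\nabla\bar v$ and the curl term $\int_D\sign(\Re Q)\,Q\,\curl^*\curl p\cdot\nabla\bar v$.

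Extracting the coercive content of these two parts is the heart of the matter. The guiding observation is that $\curl^*\curl p$ is divergence free while $\nabla\bar v$ is a gradient, so to leading order the solenoidal component of $Q\nabla v$ entering through the curl term cancels the corresponding component of the first part, leaving essentially the gradient component of $Q\nabla v$ paired against $\sign(\Re Q)\,Q\nabla\bar v$. To turn this heuristic into an estimate that tolerates discontinuous $Q$—for which the curl term is neither compact nor open to integration by parts, since the latter would produce the distribution $\nabla Q$—I would test the weak formulation of Lemma~\ref{th:GreenSolution} with the periodic extension $Ev$. This gives $\int_{\Omega_{2\rho}}\nabla w\cdot\nabla\overline{Ev}\d{x}-k^2\int_{\Omega_{2\rho}}w\overline{Ev}\d{x}=-\int_D Q\nabla v\cdot\nabla\bar v\d{x}$, whose interior right-hand side has real part $-\sign(\Re Q)\int_D|\Re Q|\nabla v\cdot\nabla\bar v$, i.e. $-\sign(\Re Q)$ times the gradient part of $\|v\|^2_{H^1_{\alpha,Q}(D)}$, while the exterior contribution $\int_{\Omega_{2\rho}\setminus D}\nabla w\cdot\nabla\overline{Ev}$ is bounded by $\|E\|^2_{H^1_\alpha(D)\to H^1_\alpha(\Omega_{2\rho})}\|v\|^2$ modulo compact remainders. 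The standing assumption~\eqref{eq;boundIm} enters here to keep the indefinite $\Im Q$ terms subordinate to $\Re Q$.

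Reading off the sign then distinguishes the two cases; the scalar model $Q=q$ already exhibits the mechanism, where one computes the leading gradient density $\sign(\Re q)\big[\Re q+(\Re q)^2-(\Im q)^2\big]$, and the same positivity structure—$|\Re Q|$ reinforced by $(\Re Q)^2$ in case (a), and $|\Re Q|(|\Re Q|-I)$ dominating in case (b)—should persist for matrix $Q$ by the projection argument and~\eqref{eq;boundIm}. In case (a), $\sign(\Re Q)=+1$ and this density dominates $|\Re Q|$, so the leading form lies above $\|v\|^2_{H^1_{\alpha,Q}(D)}$ and~\eqref{eq:coerc_2} follows; the favourable term $(\Re Q)^2$ leaves room to absorb the exterior contribution with no smallness of $\|E\|$ required. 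In case (b), $\sign(\Re Q)=-1$ and $\Re Q<-1$ make the density $|\Re Q|(|\Re Q|-I)-(\Im Q)^2$, positive because $|\Re Q|>1$, yielding a negative leading form and hence~\eqref{eq:coerc_3}; now the sign persists only if the exterior contribution, controlled by $\|E\|^2$, is beaten by the interior coercivity, which scales with $\inf_D|\Re Q|$—exactly the hypothesis~\eqref{eq:condition}. I expect the curl term for non-smooth $Q$ to be the principal obstacle: it is what forces the coercivity analysis through the weak form and the extension operator instead of pointwise or symbolic identities, and it is the source of the quantitative condition~\eqref{eq:condition} needed in the negative-contrast case.
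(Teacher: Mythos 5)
Your proposal takes a genuinely different route from the paper --- a curl/Helmholtz decomposition of $\nabla \div$ with coercivity read off from a pointwise ``leading density'' --- and it founders on two points, one of substance in the coercivity step and one in the treatment of the curl term that you yourself flag as the heart of the matter.

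First, the coercivity step asserts something false for complex $Q$. Your case (a) density is $\Re q + (\Re q)^2 - (\Im q)^2$, and the claim that it ``dominates $|\Re Q|$'' amounts to $(\Re Q)^2 \geq (\Im Q)^2$ pointwise. Neither the theorem nor the standing assumption~\eqref{eq;boundIm} gives this: the constant $C$ in~\eqref{eq;boundIm} may be arbitrarily large, so $q = \epsilon + \i$ with small $\epsilon>0$ is admissible for part (a), yet your density is then negative. The same objection hits case (b), where ``$|\Re Q|(|\Re Q|-I)-(\Im Q)^2$ is positive because $|\Re Q|>1$'' simply ignores the $(\Im Q)^2$ term. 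The paper's proof never produces a $-(\Im Q)^2$: it replaces $L_k$ by $L_\i$ (the difference $L_k-L_\i$ is compact by Appendix~\ref{se:appendixA}) and tests the weak formulation~\eqref{eq:variationalW} of $\Delta w - w = -\div(Q\nabla v)$ with $\psi=w$ itself, which converts the entire singular cross term into the manifestly nonnegative $\|w\|^2_{H^1(\Omega)}$ --- heuristically $+|Q|^2|\nabla v|^2$ rather than your $\Re(Q^2)|\nabla v|^2$. (To be fair, your symbol computation is the honest high-frequency behaviour for complex symmetric $Q$; the paper's replacement of $\Re\int_D Q\nabla w\cdot\nabla\ol{v}\d{x}$ by $\Re\int_D Q\nabla v\cdot\nabla\ol{w}\d{x}$ is exact when $Q$ is real-valued but not in general. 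So your route could at best deliver the theorem for real $Q$, where your density claims do hold; as a proof of the statement for general $Q\in L^\infty(D,\C^{2\times2})$ the domination step fails.)

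Second, the curl term is never actually estimated, and the proposed remedy cannot estimate it. Testing~\eqref{eq:pde} with $Ev$ yields an identity for the \emph{unweighted} pairing $\int_{\Omega}\nabla w\cdot\nabla\ol{Ev}\d{x}$, whereas what enters $a_Q(w,v)$ is the $Q$-weighted pairing $\int_D \sign(\Re Q)\,Q\nabla w\cdot\nabla\ol{v}\d{x}$ (equivalently, the curl term after your decomposition); passing from one to the other requires commuting the merely bounded $Q$ past derivatives or past the Helmholtz projection --- exactly the operation you rightly declare forbidden for discontinuous gratings. The paper's test function $\psi=w$ is precisely the device that removes this obstruction: with that choice the right-hand side of the weak formulation \emph{is} the $Q$-weighted cross term, so no commutator ever appears. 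Note also that this choice forces the switch $k\to\i$: for real $k$ the radiating potential $w=L_k(Q\nabla v)$ contains non-decaying propagating modes, so in general $w\notin H^1_\alpha(\Omega)$, it is not an admissible test function, and $\|w\|_{H^1(\Omega)}$ is infinite; since your argument keeps $k$ real throughout, the positive $\|w\|^2$ term is simply not available to you. This loss also invalidates the case (b) bookkeeping: your exterior contribution is bounded by a constant times $\|E\|\,\|v\|^2$ where the constant involves the operator norm of $L_k$, not by $\|E\|^2\|v\|^2$; in the paper, condition~\eqref{eq:condition} emerges because the estimate $\|\sqrt{|\Re(Q)|}\nabla v\|^2_{L^2(D,\C^2)} \leq \|E\|\,\|w\|_{H^1_\alpha(\Omega)}\|v\|_{H^1_\alpha(D)}$ (from the test $\psi=-E(v)$) is played off against $+\|w\|^2_{H^1(\Omega)}$ via Cauchy's inequality, a step that has no counterpart in your framework.
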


\begin{remark}
  \label{remark1}
  If $\Im(Q) = 0$ in $D$, then both statements~\eqref{eq:coerc_2}
  and~\eqref{eq:coerc_3} are nothing but standard G\r{a}rding
  estimates: The form $a_Q$ defines an inner product on
  $H^1_{\alpha,Q}(D)$, and, e.g.,~\eqref{eq:coerc_2} can be
  rewritten as 
  $\Re \bigl\langle v-L_k(Q\nabla v), \, v \bigr\rangle
    \geq \Vert v\Vert^2 - \Re\langle K_+v, \, v\rangle$
  for $v\in H^1_{\alpha,Q}(D)$.
\end{remark}

\begin{proof}
(a) We start with the case $\Re(Q)>0$ in $D$. 
Let $v\in H^1_{\alpha,Q}(D)$ and define $w$ by
\begin{equation}
  \label{eq:defineW}
  w = L_\i(Q\nabla v)
  = \div \int_D G_{\i,\alpha}(\cdot-y) [Q(y)\,\nabla v(y)] \d{y}
  \quad \text{in }\Omega.
\end{equation}
Then $w\in H_{\alpha}^1(\Omega)$ decays exponentially to zero
as $|x_2|$ tends to infinity. Moreover, $\Delta w-w=-\div (Q\nabla v)$
holds in $\Omega$ in the weak sense due to Lemma~\ref{th:GreenSolution}, 
that is,
\begin{equation}
 \label{eq:variationalW}
  \int_{\Omega}\bigl[ \nabla w \cdot \nabla \ol{\psi}+ w \overline{\psi}  \bigr]\d{x}
  = - \int_D Q \nabla v \cdot \nabla \ol{\psi} \d{x} \quad\mbox{for all }
  \psi\in H_{\alpha}^1(\Omega).
\end{equation}
Setting $\psi=w$, we find that
$-\Re \int_D Q \nabla v \cdot \nabla \ol{w} \d{x} = \| w \|^2_{H^1(\Omega)}$.
Hence, 
\begin{align*}
  \Re \left[ a_Q( v-L_\i(Q \nabla v), \, v ) \right]
  & = \int_D \bigl[ \Re (Q) \nabla v \cdot \nabla \ol{v} + |v|^2 \bigr]  \d{x}
       - \Re \int_D \bigl[ Q \nabla w \cdot \nabla \ol{v}  + w \overline{v}\bigr]\d{x} \\
  & = \int_D\bigl[  | \sqrt{\Re(Q)} \nabla v|^2+|v|^2 - \Re(w\overline{v})\bigr]\d{x}
    +\int_{\Omega} \bigl[|\nabla w|^2+|w|^2\bigr]\d{x} \\
  & \geq \Vert v\Vert^2_{H^1_{\alpha,Q}(D)} - \frac{1}{2}\|v\|_{L^2(D)}^2 +\ \frac{1}{2}
    \int_D \bigl[|v|^2+|w|^2-2\Re(w\overline{v}) \bigr] \d{x},    
\end{align*}
where the last term on the right is positive. In consequence, 
\begin{align*}
 \Re \left[ a_Q( v-L_k(Q\nabla v), \, v ) \right]
 \geq \Vert v\Vert^2_{H^1_{\alpha,Q}(D)} - \frac{1}{2}\langle v, \, v\rangle_{L^2(D)}
 - \Re\left[ a_Q( (L_k-L_\i)(Q\nabla v), \, v ) \right]
\end{align*}
for all $v\in H^1_{\alpha,Q}(D)$.
Due to Lemma~\ref{th:Compact1} and Rellich's lemma there exists 
a compact operator $K_1$ on $H^1_{\alpha,Q}(D)$ such that
$\langle v,v\rangle_{L^2(D)}
  = 2 \Re\langle K_1v, \, v\rangle_{H^1_{\alpha,Q}(D)}$.
Further, the operator $(L_k - L_\i)(Q\nabla \cdot)$ is compact on
$H^1_\alpha(D)$ due to the smoothness of the kernel
shown in Appendix~\ref{se:appendixA}. Hence the operator 
$K_2$ defined by $\langle K_2v, \, v\rangle_{H^1_{\alpha,Q}(D)} = a_Q( (L_k-L_\i)(Q\nabla v), \, v )$
is compact on $H^1_{\alpha,Q}(D)$ due to Lemma~\ref{th:Compact1} 
and the boundedness of the imbedding
$H^1_\alpha(D)\subset H^1_{\alpha,Q}(D)$.
Setting $K_+:=K_1+K_2$, we obtain the claimed generalized
G\r{a}rding inequality.

(b) Now we consider the case that $\Re(Q) < -1$ in $D$, and 
assume additionally that~\eqref{eq:condition} holds.
As in the first part of the proof, the variational
formulation~\eqref{eq:variationalW} for $w$, defined as
in~\eqref{eq:defineW}, yields that
\begin{align*}
  - \Re \left[ a_Q( v-L_\i(Q\nabla v), \, v ) \right]
  & = \Re\int_D \bigl[ \Re (Q) \nabla v \cdot \nabla \ol{v} - |v|^2
       - Q \nabla w \cdot \nabla \ol{v} + w \overline{v}\bigr]\d{x} \\
  & = - \int_D\bigl[ |\sqrt{|\Re (Q)|} \nabla v|^2 + |v|^2 \bigr]\d{x}
       + \| w \|^2_{H^1_\alpha(\Omega)} 
       + \Re \int_D w\overline{v} \d{x} \\
  & \geq \| w \|^2_{H^1_\alpha(\Omega)}
       - \| v \|_{H^1_{\alpha,Q}(D)}^2 + \Re \int_D w\overline{v} \d{x}.
\end{align*}
We plug in $\psi = -E(v)$ into~\eqref{eq:variationalW}
and take the real part of that equation, to find that
\begin{align*}
  \| \sqrt{|\Re(Q)|} \nabla v \|_{L^2(D, \C^2)}^2
  & \leq \| w \|_{H^1_\alpha(\Omega)} \| E(v) \|_{H^1_\alpha(\Omega)} 
  \leq \| E \|_{H^1_\alpha(D) \to H^1_\alpha(\Omega_{2\rho})} \, \| w \|_{H^1_{\alpha}(\Omega)} \| v \|_{H^1_{\alpha}(D)} \\
  & \leq \| E \| \, \| w \|_{H^1_{\alpha}(\Omega)} \left( \| | \sqrt{|\Re(Q)|}^{-1} |_2 \|_{L^\infty(D)} \| v \|_{H^1_{\alpha,Q}(D)}  + \| v \|_{L^2(D)} \right).
\end{align*}
For $x \in D$, the spectral matrix norm $| \sqrt{|\Re(Q)|}^{-1}(x) |_2$ of the inverse of 
$\sqrt{|\Re(Q)|}(x)$ equals the reciprocal value $\lambda_{\min}(x)^{-1/2}$ 
($\lambda_{\min,\max}$ are the smallest/largest eigenvalue, in magnitude, of $\Re (Q)$, 
see~\eqref{eq:lambdaMinMax}). Note that  
\begin{align*}
  \| | \sqrt{|\Re(Q)|}^{-1} |_2 \|_{L^\infty(D)}^{-1} 
  & = [ \sup_{x \in D} \lambda_{\min}(x)^{-1/2} ]^{-1}
  = \inf_{x \in D} \lambda_{\min}(x)^{1/2} 
  \leq \sup_{x \in D} \lambda_{\max}(x)^{1/2} \\
  & \leq [1+ \sup_{x \in D} \lambda_{\max}(x)]^{1/2}
  = [1+ \| | \Re (Q) |_2 \|_{L^\infty(D)} ]^{1/2}. 
\end{align*}
Next, we estimate that 
\begin{multline*}
  \| v \|_{H^1_{\alpha,Q}(D)}^2 - \big[1+\| |\Re(Q)|_2 \|_{L^\infty(D)} \big] \| v \|_{L^2(D)}^2
  \leq \| v \|_{H^1_{\alpha,Q}(D)}^2 - \| v \|_{L^2(D)}^2 \\
  \leq \| E \| \,  \| | \sqrt{|\Re(Q)|}^{-1} |_2 \|_{L^\infty(D)} \, \| w \|_{H^1_{\alpha}(\Omega)} \\
  \left( \| v \|_{H^1_{\alpha,Q}(D)}  +  [1+ \| | \Re (Q) |_2 \|_{L^\infty(D)} ]^{1/2}  \| v \|_{L^2(D)} \right).
\end{multline*}
Dividing by the term in brackets on the right, we obtain that
\begin{equation}
  \label{eq:intermediateResult}
  \| v \|_{H^1_{\alpha,Q}(D)} - \big[1+\| |\Re(Q)|_2 \|_\infty \big]^{1/2} \| v \|_{L^2(D)}
  \leq \| E \| \,  \| | \sqrt{|\Re(Q)|}^{-1} |_2 \|_{L^\infty(D)}  \, \| w \|_{H^1_{\alpha}(\Omega)}.
\end{equation}
Note that the constant
\[
  c := \| E \|_{H^1_{\alpha}(D) \to H^1_{\alpha}(\Omega_{2\rho})} \
  \| | \sqrt{|\Re(Q)|}^{-1} |_2 \|_{L^\infty(D)}
\]
is by assumption~\eqref{eq:condition} less than one. If we set for a moment,
$C=[1+\| | \Re(Q) |_2 \|_\infty]^{1/2}$ then~\eqref{eq:intermediateResult}
and Cauchy's inequality imply that
\begin{align*}
  c^2 \| w \|_{H^1_{\alpha}(\Omega)}^2
  & \geq \| v \|_{H^1_{\alpha,Q}(D)}^2 + C^2 \| v \|_{L^2(D)}^2
  - 2 C \| v \|_{H^1_{\alpha,Q}(D)} \| v \|_{L^2(D)} \\
  & \geq (1-\epsilon^2) \| v \|_{H^1_{\alpha,Q}(D)}^2
  + C^2 (1-1/\epsilon^2) \| v \|_{L^2(D)}^2,
  \qquad \epsilon \in (0,1). 
\end{align*}
In consequence,
\begin{multline}
  \label{eq:almostDone}
  - \Re \left[ a_Q( v-L_k(Q\nabla v), \, v ) \right] \geq 
  \left( \frac{1-\epsilon^2}{c^2} -1 \right) \| v \|_{H^1_{\alpha,Q}(D)}^2 \\
  - \Re \int_D w\overline{v} \d{x}
  + C^2 \frac{\epsilon^2-1}{(c\epsilon)^2} \| v \|_{L^2(D)}^2
  + \Re\left[ a_Q( (L_k-L_\i)(Q\nabla v), \, v ) \right]
\end{multline}
for $\epsilon \in (0,1)$. Since $c<1$ there exists
$\epsilon \in (0,1)$ such that $1-\epsilon^2 > c^2$,
that is, $(1-\epsilon^2)/c^2 -1 > 0$.
The last three terms on the right-hand side
of~\eqref{eq:almostDone} can then be treated as compact
perturbations, in a similar way as in the proof of the first part. 
\end{proof}

\begin{remark}
  \label{th:remark}
  (a) If $\Re (Q)< -1$ in $D$, then solutions to $\div ((I_2+Q)\nabla u) + k^2 u$ 
  decay exponentially in $D$. If not only the electric permittivity but also the 
  magnetic permeability changed sign, then the corresponding solution 
  would not decay, yielding a possibly more interesting metamaterial. Volume 
  integral equations for such structures yield operator equations combining 
  $L_k$ and $V_k$, see, e.g.,~\cite{Kirsc2009}. Since $V_k$ is compact 
  on $H^1$, the above G\r{a}rding inequalities extend to this setting. For simplicity, 
  we restrict ourselves here to the non-magnetic case.
  
  (b) In the last result, we assumed that the sign of $\Re (Q)$ is constant in $D$. 
  It is possible to treat sign changes of the contrast function in $D$, but the 
  simple choice $\psi = - E(v)$ that we plugged in the second part of the proof 
  into~\eqref{eq:variationalW} has to be adapted. 
  
\end{remark}

It is a standard result that the G\r{a}rding inequalities from the last theorem imply the following  
consequences for the solvability of the integral equation and the scattering problem. 

\begin{theorem}
\label{eq:fredholm}
Suppose that the assumptions of Theorem~\ref{l-coerc_2}(a) or (b) hold, 
that the boundedness condition~\eqref{eq;boundIm} holds, 
and that the homogeneous equation $v - L_k (Q \nabla v) = 0$ in $H^1_{\alpha, Q}(D)$ 
has only the trivial solution. Then~\eqref{eq:lippmannD} has a unique solution 
for all $f \in L^2(D,\C^2)$. If $f = Q \nabla u^i$, then this solution can be extended by the 
right-hand side of~\eqref{eq:lippmannD} to a solution to the variational formulation of the 
scattering problem~\eqref{eq:variationalPeriodic}. Especially, if the integral equation is uniquely 
solvable in $H^1_{\alpha, Q}(D)$, then~\eqref{eq:variationalPeriodic} is uniquely solvable
in $H^1_\alpha(\Omega_\rho)$.
\end{theorem}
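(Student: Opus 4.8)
The plan is to deduce the whole statement from a single operator equation in $H^1_{\alpha,Q}(D)$, together with a transfer principle based on Lemma~\ref{th:GreenSolution}. First I would recast the variational problem~\eqref{eq:lippmannVariational} as an operator equation. By Lemma~\ref{th:bounded} the map $T:\, v \mapsto v - L_k(Q\nabla v)$ is bounded on $H^1_{\alpha,Q}(D)$, so the sesquilinear form $\beta(u,v) := a_Q(Tu,v)$ is bounded and the Riesz representation theorem produces a bounded operator $B$ with $\langle Bu, v\rangle_{H^1_{\alpha,Q}(D)} = \beta(u,v)$. Splitting $L_k(Q\nabla u + f) = L_k(Q\nabla u) + L_k f$ by linearity turns~\eqref{eq:lippmannVariational} into $Bu = g$, where $g$ represents the bounded antilinear functional $v \mapsto a_Q(L_k f, v)$.

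Next I would invoke the G\r{a}rding inequalities. In case~(a) the inequality~\eqref{eq:coerc_2} translates into $\Re\langle Bv, v\rangle_{H^1_{\alpha,Q}(D)} \geq \Vert v\Vert_{H^1_{\alpha,Q}(D)}^2 - \Re\langle K_+ v, v\rangle_{H^1_{\alpha,Q}(D)}$, so that $B + K_+$ is coercive and hence invertible by the Lax--Milgram theorem; since $K_+$ is compact, $B$ is Fredholm of index zero. Case~(b) is identical after multiplying~\eqref{eq:coerc_3} by $-1$, which shows $-B + K_-$ coercive. The assumed triviality of the homogeneous equation $Tv = 0$ is, because $a_Q$ is non-degenerate, equivalent to $Bv = 0$, so $B$ is injective, hence bijective. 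This yields a unique $u \in H^1_{\alpha,Q}(D)$ solving~\eqref{eq:lippmannD}, which by Proposition~\ref{th:equivalenceWeight-NoWeight} already lies in $H^1_\alpha(D)$.

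To realize the scattered field for $f = Q\nabla u^i$ I would set $u^s := \div V_k(Q\nabla u + f)$ on all of $\Omega$; note $Q\nabla u + f \in L^2(D,\C^2)$. By Lemma~\ref{th:GreenSolution} this is the unique radiating weak solution of $\Delta u^s + k^2 u^s = -\div(Q\nabla u + f)$, so it satisfies~\eqref{eq:pde} and the Rayleigh condition~\eqref{eq:RayleighCondition}; since $u^s = u$ on $D$ and $\supp Q \subset \ol{D}$, one may replace $Q\nabla u$ by $Q\nabla u^s$ to obtain~\eqref{eq:variationalForm}. The genuinely substantive step is to pass from this full-space radiating formulation to the truncated formulation~\eqref{eq:variationalPeriodic} on $\Omega_\rho$: here I would use that the Rayleigh expansion condition on $\Gamma_{\pm\rho}$ is encoded exactly by the exterior Dirichlet-to-Neumann operators $T^\pm$, so that restricting $u^s$ to $\Omega_\rho$ and integrating by parts across $\Gamma_{\pm\rho}$ converts~\eqref{eq:variationalForm} into~\eqref{eq:variationalPeriodic}, with the boundary terms supplied by $T^\pm$, see~\cite{Kirsc1993, Bonne1994}. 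This equivalence, rather than the Fredholm argument, is where I expect the main care to be needed.

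Finally I would transfer uniqueness. The construction $u \mapsto \div V_k(Q\nabla u + f)$ and restriction to $D$ are mutually inverse between solutions of~\eqref{eq:lippmannD} in $H^1_\alpha(D)$ and solutions of~\eqref{eq:variationalPeriodic} in $H^1_\alpha(\Omega_\rho)$: conversely, any $u^s$ solving~\eqref{eq:variationalPeriodic} extends by the same DtN equivalence to a radiating solution on $\Omega$, which by the uniqueness part of Lemma~\ref{th:GreenSolution} must equal $\div V_k(Q\nabla u^s + f)$, so $u^s|_D$ solves~\eqref{eq:lippmannD}. Hence two solutions of~\eqref{eq:variationalPeriodic} restrict to two solutions of~\eqref{eq:lippmannD}; if the latter is uniquely solvable they coincide on $D$, and since each is determined on all of $\Omega_\rho$ by its values on $D$ through $\div V_k(Q\nabla \cdot + f)$, they coincide everywhere, giving uniqueness in $H^1_\alpha(\Omega_\rho)$.
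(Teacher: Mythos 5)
The paper gives no proof of this theorem at all---it is stated as ``a standard result'' following from the G\r{a}rding inequalities of Theorem~\ref{l-coerc_2}---and your proposal is a correct implementation of exactly that standard route: Riesz representation of the form $a_Q(T\cdot,\cdot)$ with $T = I - L_k(Q\nabla\cdot)$, the G\r{a}rding inequality plus Lax--Milgram to exhibit $B$ as a compact perturbation of an invertible operator (hence Fredholm of index zero), non-degeneracy of $a_Q$ to turn the assumed triviality of the homogeneous equation into injectivity of $B$, and Lemma~\ref{th:GreenSolution} together with the Dirichlet-to-Neumann equivalence to pass between~\eqref{eq:lippmannD} and~\eqref{eq:variationalPeriodic} in both directions. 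The only step you assert without justification is the boundedness of $a_Q$ itself on $H^1_{\alpha,Q}(D)$, which the Riesz step requires and which does not follow from Lemma~\ref{th:bounded} (that lemma bounds $T$, not the form); this is precisely where the hypothesis~\eqref{eq;boundIm} enters---the $\Re(Q)$ contribution is controlled by the definition of the weighted norm, while~\eqref{eq;boundIm} controls the $\Im(Q)$ contribution---so under the theorem's stated assumptions your argument is complete.
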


\section{G\r{a}rding Inequalities in Standard Sobolev Spaces}
\label{se:isotropic} 

The generalized G\r{a}rding inequalities from the last
section imply G\r{r}arding inequalities in the standard 
unweighted periodic Sobolev space $H^1_\alpha(D)$ 
if the material parameter $\epsr$ (or, equivalently, the 
contrast), is isotropic. Hence, in this section we assume that 
the contrast is a scalar real-valued function $q$, that is,  
\[
  Q = q I_2 \quad \text{in } \Omega.
\]  
As above, $\ol{D}$ is the support of $q$.
Under this assumption we denote the weighted 
Sobolev spaces from~\eqref{eq:weightedNorm} by $H^1_{\alpha, q}(D)$,
and their norm by 
\[
  \Vert u \Vert_{H^1_{\alpha,q}(D)}:= \left( \| \sqrt{|\Re(q)|} \nabla u \|_{L^2(D,\C^2)}^2 + \| u \|_{L^2(D)}^2 \right)^{1/2}.
\]
Since $q$ is real-valued, the form $a_q$ from~\eqref{eq:aq} 
is the inner product of $H^1_{\alpha,q}(D)$, and the 
generalized G\r{a}rding inequalities from the last section 
directly transform to standard ones.
Again, we assume that  the sign of $q$ is constant in $D$. 
Since we use regularity 
theory to prove compactness of certain commutators, we will need to  
require more smoothness of $q$ and $D$ compared to the results 
in the last section. 

\begin{lemma}
  \label{th:Compact2}
  Assume that $D$ is a domain of class $C^{2,1}$ and that
  $\mu \in C^{2,1}(\ol{D})$, $2\pi$-periodic in $x_1$. Then 
  $T: \, H^1_\alpha(D) \rightarrow H^1_\alpha(D)$ defined by 
  $Tv := \div \big[\mu V_k(q\nabla(v/\mu)) - V_k(q\nabla v)\big]$
  is a compact operator.
\end{lemma}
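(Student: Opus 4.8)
The plan is to reduce $T$ algebraically to the commutator of multiplication by $\mu$ with the volume potential $V_k$, plus a manifestly lower-order remainder, and then to gain one extra derivative on the commutator by an elliptic-regularity argument. Each resulting piece will map $H^1_\alpha(D)$ boundedly into $H^2(D)$, so that compactness follows from the compact embedding $H^2(D) \hookrightarrow H^1_\alpha(D)$, valid because $D$ is bounded and of class $C^{2,1}$.

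First I would rewrite $T$. Set $g := q\nabla(v/\mu)$, and note $v\mapsto g$ is bounded from $H^1_\alpha(D)$ into $L^2(D,\C^2)$ (here $\mu$ is bounded away from zero, so $v/\mu\in H^1_\alpha(D)$, and $q,\nabla\mu\in L^\infty(D)$). The product rule $\nabla v = \mu\,\nabla(v/\mu) + (v/\mu)\nabla\mu$ gives $q\nabla v = \mu g + r$ with $r := (v/\mu)\,q\,\nabla\mu$. By linearity of $V_k$,
\[
  \mu V_k g - V_k(q\nabla v) = \bigl(\mu V_k g - V_k(\mu g)\bigr) - V_k r ,
\]
so that, with $L_k = \div V_k$,
\[
  Tv = \div\bigl(\mu V_k g - V_k(\mu g)\bigr) - \div V_k r = \div\bigl(Rg\bigr) - L_k r ,
  \qquad Rg := \mu V_k g - V_k(\mu g) .
\]
Here $R$ is the (componentwise) commutator of multiplication by $\mu$ with $V_k$. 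Since $r$ depends on $v$ only through $v$ itself and $\mu,q,\nabla\mu\in L^\infty(D)$, the map $v\mapsto r$ is bounded from $L^2(D)$ into $L^2(D,\C^2)$; composing with the bounded operator $L_k\colon L^2(D,\C^2)\to H^1_\alpha(D)$ exhibits $v\mapsto L_k r$ as the composition of the compact embedding $H^1_\alpha(D)\hookrightarrow L^2(D)$ with a bounded map, hence as a compact operator on $H^1_\alpha(D)$.

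The main work is the commutator term $\div(Rg)$. Put $p := V_k g$. By Lemma~\ref{th:H2smoothness}, $p\in H^2_\alpha(\Omega_\RR)$ with $\|p\|_{H^2}\le C\|v\|_{H^1_\alpha(D)}$, and by Lemma~\ref{th:GreenSolution} each component of $p$ solves $\Delta p + k^2 p = -g$ in $\Omega$. Applying $\Delta+k^2$ componentwise to $Rg = \mu p - V_k(\mu g)$, using $(\Delta+k^2)V_k(\mu g) = -\mu g$ and the Leibniz rule, the source terms $-\mu g$ cancel and
\[
  (\Delta + k^2)(Rg) = 2\,\nabla\mu\cdot\nabla p + (\Delta\mu)\,p \quad\text{in } \Omega .
\]
Because $\mu\in C^{2,1}(\ol{D})$ we have $\nabla\mu\in W^{1,\infty}$ and $\Delta\mu\in W^{1,\infty}$, and since $p\in H^2$ the right-hand side lies in $H^1$ with $H^1$-norm bounded by $C\|v\|_{H^1_\alpha(D)}$. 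Interior elliptic regularity for the Helmholtz operator then upgrades $Rg$ from $H^2$ to $H^3$ on a neighborhood of $\ol{D}$ (which is compactly contained in $\Omega$), again with norm controlled by $\|v\|_{H^1_\alpha(D)}$. Consequently $v\mapsto\div(Rg)$ is bounded from $H^1_\alpha(D)$ into $H^2(D)$, and composing with the compact embedding $H^2(D)\hookrightarrow H^1_\alpha(D)$ shows this term is compact on $H^1_\alpha(D)$ as well. Summing the two compact contributions proves that $T$ is compact.

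The step I expect to be most delicate is the regularity bookkeeping for the commutator identity: verifying that the source $2\nabla\mu\cdot\nabla p + (\Delta\mu)p$ genuinely lies in $H^1$ — this is exactly where the hypothesis $\mu\in C^{2,1}$ enters, since $(\Delta\mu)p\in H^1$ requires the third derivatives of $\mu$ to be bounded — and confirming that the elliptic regularity estimate really lifts $Rg$ to $H^3$ on a set containing $\ol{D}$, uniformly in $v$. Once the $H^3$-bound is established, compactness of $\div(Rg)$ follows from Rellich's theorem exactly as for the remainder term.
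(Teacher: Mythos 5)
Your proof is correct, but it takes a genuinely different route from the paper's. The paper works directly with $w=\mu^\ast w_1-w_2$ (where $\mu^\ast$ is a $2\pi$-periodic $C^{2,1}$ extension of $\mu$): the source term of the PDE for $w$ retains the piece $-q\mu\nabla(1/\mu)v$, which jumps across $\partial D$, so that source is only \emph{piecewise} $H^1$; the paper must then check that the Cauchy data of $w$ match across $\partial D$ and invoke elliptic \emph{transmission} regularity (\cite[Theorem 4.20]{McLea2000}, adapted to the periodic setting) to conclude $w\in H^3$. You instead peel that jumping piece off first: writing $q\nabla v=\mu g+r$ with $r=(v/\mu)\,q\nabla\mu$, the term $L_k r$ is compact by soft arguments (zeroth order in $v$ composed with the compact embedding $H^1_\alpha(D)\hookrightarrow L^2(D)$), and the remaining pure commutator $Rg=\mu V_kg-V_k(\mu g)$ has, after the cancellation of the $-\mu g$ terms, the source $2\nabla\mu\cdot\nabla p+(\Delta\mu)p$, which is \emph{globally} $H^1$ precisely because $p=V_kg$ is $H^2$ across $\partial D$; hence plain interior elliptic regularity suffices. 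Your route avoids the transmission theorem and all interface bookkeeping (the regularity of $\partial D$ enters only through compact embeddings and the extension of $\mu$), which is more elementary and would even tolerate less boundary smoothness. Two small repairs are needed, both devices the paper itself uses: (i) $\mu$ is defined only on $\ol{D}$, so before computing $(\Delta+k^2)(\mu p)$ on $\Omega$ you must replace $\mu$ by a $2\pi$-periodic $C^{2,1}$ extension $\mu^\ast$ (Appendix~\ref{se:extension}); the cancellation persists since $\mu^\ast=\mu$ on $D$ and $g$ vanishes outside $D$; (ii) $\ol{D}$ may touch the lines $x_1=\pm\pi$, so it need not be compactly contained in $\Omega$ --- the interior regularity estimate must be applied in the quasi-periodic sense, on a strip $\{|x_2|<\rho'\}$ containing $\ol{D}$ after quasi-periodic extension in $x_1$, which is the same ``straightforward adaption to the periodic setting'' the paper invokes for its transmission result.
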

\begin{proof}
  We denote by $\mu^\ast \in C^{2,1}(\overline{\Omega_{\rho}})$
  a periodic extension of $\mu \in C^{2,1}(\ol{D})$ to $\Omega_{\rho}$
  (see Appendix~\ref{se:extension} on periodic extension operators).
  Then $\mu^\ast|_D = \mu$. Consider the two $\alpha$-quasi-periodic functions
  \[
    w_1 = V_k(q\nabla(v/\mu)) \quad \text{and} \quad
    w_2 = V_k(q\nabla v) \quad \text{in } \Omega_\rho.
  \]
  Both functions satisfy differential equations,
  \[
    \Delta(\mu^\ast w_1) + k^2 (\mu^\ast w_1) =
    \begin{cases}
      -q \mu \nabla(v/\mu) + 2\nabla \mu\cdot\nabla w_1 + w_1 \Delta \mu & \text{ in } D,\\
      2\nabla \mu^*\cdot\nabla w_1 + w_1\Delta\mu^* & \text{ in } \Omega_{\rho}\setminus \ol{D},
    \end{cases}
  \]
  and $\Delta w_2 + k^2w_2  = -q \nabla v$ in $D$ and 
  $\Delta w_2 + k^2w_2 = 0$ in $\Omega_{\rho}\setminus \ol{D}$.
  Hence, $w =\mu^\ast w_1 - w_2$ solves 
  \[
    \Delta w + k^2 w =
    \begin{cases}
      -q \mu \nabla(1/\mu)v + 2\nabla \mu \cdot \nabla w_1 + w_1\Delta\mu =: g_1 & \text{ in } D,\\
      w_1\Delta\mu^* + 2\nabla \mu^*\cdot\nabla w_1=: g_2 & \text{ in } \Omega_{\rho}\setminus \ol{D}.
    \end{cases}
  \]
  The functions $g_1$ and $g_2$ belong to $H^1_\alpha(D)$ and
  $H^1_\alpha(\Omega_{\rho}\setminus \ol{D})$, respectively.
  Their norms in these spaces are bounded by the norm of
  $\mu$ in $C^{2,1}(\ol{D})$ times the norm of $v$
  in $H^1_\alpha(D)$. Due to Lemma~\ref{th:H2smoothness},
  the jump of the trace and the normal trace of $w_{1,2}$
  across $\partial D$ vanishes. Hence, the Cauchy data
  of $w$ are also continuous across the boundary of $D$.

  Since the volume potential $V_k$ is bounded from $L^2(D)$
  into $H^2_\alpha(D)$, it is clear that $w$ belongs to
  $H^2_\alpha(D)$. The  smoothness assumptions on $D$ and
  $\mu$ moreover allow to apply elliptic transmission regularity
  results~\cite[Theorem 4.20]{McLea2000}
  to conclude that $w$ is even smoother than $H^2$.
  These regularity results will in turn imply the compactness
  of the operator $T: \, v \mapsto \div w$ on $H^1_\alpha(D)$.
  A straightforward adaption of the transmission regularity 
  result~\cite[Theorem 4.20]{McLea2000} 
  to the periodic setting 
  shows that  
  \[
    \|w\|_{H^3(D)}
    \leq C \left[ \|w\|_{H^1(\Omega_\rho)} + \|g_1\|_{H^1( D)}
    + \|g_2\|_{H^1(\Omega_\rho\setminus \ol{D})} \right]
    \leq C \|v\|_{H^1_\alpha(D)}.
  \]
%
%
\end{proof}

The following lemma shows that the G\r{a}rding inequalities
in the weighted spaces $H^1_{\alpha,q}(D)$ can be transformed
into estimates in $H^1_{\alpha}(D)$ if, roughly speaking,
the real-valued contrast $q$ is smooth enough and if
$(\nabla q)/q$ is bounded.

\begin{theorem}
\label{th:FreeGarding}
Assume that the scalar contrast $q$ is real-valued,
that $|q| \geq q_0 >0$ in $D$, and that 
 $\sqrt{|q|} \in C^{2,1}(\ol{D})$.
Moreover, assume that $D$ is of class $C^{2,1}$.

(a) If $q>0$ there exists a compact operator $K_+$ on $H^1_\alpha(D)$ such that
\begin{equation*}
  \Re\langle v-L_k(q\nabla v),v\rangle_{H^1_\alpha(D)} \geq \|v\|^2_{H^1_\alpha(D)}
  - \Re\langle K_+v, \, v\rangle_{H^1_\alpha(D)}, \qquad v \in H^1_\alpha(D).
\end{equation*}

(b) If $q<0$, and if
  \begin{equation}
    \label{eq:condition2}
    \| E \|_{H^1_{\alpha}(D) \to H^1_{\alpha}(\Omega_{2\rho})}  <  \inf_D | q |^{1/2}, 
  \end{equation}
then there exists a compact operator $K_-$ on $H^1_\alpha(D)$ such that
\begin{equation*}
  - \Re\langle v-L_k(q\nabla v),v\rangle_{H^1_\alpha(D)} \geq C\|v\|^2_{H^1_\alpha(D)}
  - \Re\langle K_-v, \, v\rangle_{H^1_\alpha(D)}, \qquad v \in H^1_\alpha(D),
\end{equation*}
where $C$ is the constant from~\eqref{eq:coerc_3}.
\end{theorem}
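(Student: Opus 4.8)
The plan is to transport the weighted G\r{a}rding inequalities of Theorem~\ref{l-coerc_2} to the unweighted space $H^1_\alpha(D)$ by conjugating the operator $B_q : v \mapsto v - L_k(q\nabla v)$ with multiplication by $\mu := \sqrt{|q|}$. Since $|q|\geq q_0>0$ and $\sqrt{|q|}\in C^{2,1}(\ol{D})$, the maps $u\mapsto \mu u$ and $u\mapsto u/\mu$ are bounded isomorphisms of $H^1_\alpha(D)$, and $H^1_{\alpha,q}(D)$ coincides with $H^1_\alpha(D)$ as a set with an equivalent norm, so that an operator is compact on one iff it is compact on the other. Because $q$ is real-valued, Remark~\ref{remark1} tells us that $a_q$ is exactly the inner product of $H^1_{\alpha,q}(D)$ and that, in both cases (a) and (b), $\langle a, b\rangle_{H^1_{\alpha,q}(D)} = \int_D [\,|q|\,\nabla a\cdot\nabla\ol{b} + a\ol{b}\,]\d{x}$. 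First I would substitute $v = u/\mu$ (with $u\in H^1_\alpha(D)$) into the weighted estimate and then systematically replace every weighted quantity by its unweighted counterpart for $u$, controlling all error terms as compact perturbations.

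Two reductions carry the argument. The first is an operator identity furnished by Lemma~\ref{th:Compact2}: writing $\tilde B u := \mu\,B_q(u/\mu) = u - \mu L_k(q\nabla(u/\mu))$ and expanding $\div[\mu V_k(\cdot)] = \mu\div V_k(\cdot) + \nabla\mu\cdot V_k(\cdot)$, one finds
\[
  \mu L_k(q\nabla(u/\mu)) = L_k(q\nabla u) + T u - \nabla\mu\cdot V_k(q\nabla(u/\mu)),
\]
where $T$ is the compact operator of Lemma~\ref{th:Compact2}. The remaining term $u\mapsto \nabla\mu\cdot V_k(q\nabla(u/\mu))$ maps $H^1_\alpha(D)$ boundedly into $H^2_\alpha(D)$ (by Lemma~\ref{th:H2smoothness} together with the $C^{1,1}$-multiplier $\nabla\mu$) and is therefore compact on $H^1_\alpha(D)$ by Rellich's theorem. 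Hence $\tilde B = B_q + (\text{compact})$ on $H^1_\alpha(D)$. The second reduction compares the two inner products: expanding $\nabla(\mu a)$ and $\nabla(\mu b)$ shows that the sesquilinear remainder
\[
  \rho(a,b) := \langle a, b\rangle_{H^1_{\alpha,q}(D)} - \langle \mu a, \mu b\rangle_{H^1_\alpha(D)}
\]
consists only of terms of the shape $\int_D(\cdots)\nabla a\,\ol{b}$, $\int_D(\cdots)a\,\nabla\ol{b}$ and $\int_D(\cdots)a\ol{b}$ with bounded coefficients built from $\mu$ and $\nabla\mu$. In each of these at least one factor is a bare function (not a gradient), which is compact from $H^1_\alpha(D)$ into $L^2(D)$ by Rellich; Lemma~\ref{th:Compact1} then shows that $\rho$ is represented by a compact operator on $H^1_\alpha(D)$.

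With these two facts in hand I would assemble the estimate. Substituting $v=u/\mu$ and using $\mu v = u$, $\mu B_q v = \tilde B u$ gives
\[
  \langle B_q v, v\rangle_{H^1_{\alpha,q}(D)} = \langle \tilde B u, u\rangle_{H^1_\alpha(D)} + \rho(B_q v, v) = \langle B_q u, u\rangle_{H^1_\alpha(D)} + \langle K' u, u\rangle_{H^1_\alpha(D)},
\]
where $K'$ is compact (collecting the compact part of $\tilde B$ and the operator representing $\rho(B_q v, v)$, which is compact since $v$ and $B_q v$ depend boundedly on $u$). The same manipulation yields $\|v\|^2_{H^1_{\alpha,q}(D)} = \|u\|^2_{H^1_\alpha(D)} + \langle K'' u, u\rangle_{H^1_\alpha(D)}$ and, denoting by $\tilde K_\pm$ the compact operator $K_\pm$ of Theorem~\ref{l-coerc_2} and using that multiplication by $\mu$ is bounded, $\langle \tilde K_\pm v, v\rangle_{H^1_{\alpha,q}(D)} = \langle K''' u, u\rangle_{H^1_\alpha(D)}$, all with compact $K''$, $K'''$. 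Inserting these identities into~\eqref{eq:coerc_2} (for $q>0$) and into~\eqref{eq:coerc_3} (for $q<0$, whose hypothesis~\eqref{eq:condition} is precisely~\eqref{eq:condition2} since $|\Re(Q)|_2^{1/2}=|q|^{1/2}$ for scalar $q$) and taking real parts, the compact pieces combine into a single compact $K_\pm$ and the leading terms reproduce exactly $\|u\|^2_{H^1_\alpha(D)}$, with the constant $C$ in case (b) unchanged.

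The main obstacle is not conceptual but bookkeeping: one must verify that every remainder generated by the conjugation and by the inner-product comparison is genuinely compact on $H^1_\alpha(D)$, which rests on repeated use of Lemma~\ref{th:Compact1} and Rellich's theorem and on the boundedness $V_k : L^2(D,\C^2)\to H^2_\alpha(D)$. The genuinely hard analytic input, namely the compactness of the commutator $\mu L_k(q\nabla(\cdot/\mu)) - L_k(q\nabla\cdot)$, is already isolated in Lemma~\ref{th:Compact2}; this is also the step that forces the stronger regularity hypotheses $D\in C^{2,1}$ and $\sqrt{|q|}\in C^{2,1}(\ol{D})$, since the proof of that lemma invokes elliptic transmission regularity.
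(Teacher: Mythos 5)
Your proposal is correct and follows essentially the same route as the paper's own proof: substituting $v = u/\mu$ with $\mu = \sqrt{|q|}$ into the weighted G\r{a}rding inequality of Theorem~\ref{l-coerc_2}, isolating the hard commutator compactness in Lemma~\ref{th:Compact2}, and absorbing all remaining conjugation and inner-product discrepancies as compact perturbations via Rellich's theorem and Lemma~\ref{th:Compact1}. The only difference is organizational --- you package the error terms into an operator identity $\tilde B = B_q + (\text{compact})$ and a form remainder $\rho$, whereas the paper writes out the explicit operators $K_2$, $K_3$ --- but the mathematical content is the same.
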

\begin{proof}
We only prove case (a) here, supposing that
$q > q_0>0$ in $D$. The proof for case
(b) is analogous, essentially one needs to
replace $\sqrt{q}$ by $\sqrt{|q|}$. For
simplicity, let us from now on abbreviate
\[
  \mu := \sqrt{q} \in C^{2,1}(\ol{D}).
\]
Choose an arbitrary $u \in H^1_\alpha(D)$ and
consider $v = u /\mu$. Our assumptions on $q$
imply that $v \in H^1_{\alpha,q}(D)$, since
$\| v \|_{H^1_{\alpha,q}(D)}^2
  \leq (2 + \| 1/\mu\|_\infty^2 + 2 \| (\nabla\mu)/\mu \|_\infty^2) \| u \|_{H^1_\alpha(D)}^2$.
In Theorem~\ref{l-coerc_2}(a) (see also Remark~\ref{remark1})
we showed that
$\Re \langle v-L_k(q\nabla v), \, v \rangle_{H^1_{\alpha,q}(D)}
  \geq \|v\|^2_{H^1_{\alpha,q}(D)} - \Re\langle  K_1 v, \, v\rangle_{H^1_{\alpha,q}(D)}$
for a compact operator $K_1$ on $H^1_{\alpha,q}(D)$.
This implies that 
\begin{multline*}
  \Re \langle u-L_k(q\nabla u), \, u \rangle_{H^1_{\alpha}(D)} 
  \geq  \| u \|^2_{H^1_\alpha(D)}
    + \Re\langle K_1(u/\mu), \, u/\mu \rangle_{H^1_{\alpha,q}(D)}\\
  + \Re\langle K_2 u, \, \nabla u \rangle_{L^2(D, \C^2)}
  + \Re\langle K_3 u, \, u \rangle_{L^2(D)},
\end{multline*}
with operators  
\begin{multline*}
 K_2 u =  \nabla\Big[\div\big[ \mu V_k(q\nabla(u/\mu)) - V_k(q\nabla u)\big]\Big] 
 - \nabla\Big[ \nabla\mu \cdot V_k(q\nabla(u/\mu))\Big] + (\nabla\mu) L_k(q\nabla(u/\mu))
\end{multline*}
and 
\begin{equation*}
  K_3 u = q\nabla(1/\mu) \cdot \big[\nabla L_k(q\nabla(u/\mu))\big]
   +  L_k(q\nabla(u/\mu))/\mu - L_k(q\nabla u).
\end{equation*}
Lemma~\ref{th:Compact2}, the smoothness of $q$, and the 
boundedness of $V_k$ and $L_k$ from $L^2(D)$ and 
$L^2(D,\C^2)$ into $H^2_\alpha(D)$ and $H^1_\alpha(D)$, respectively,
show that $K_2$ and $K_3$ are compact and bounded from
$H^1_\alpha(D)$ into $L^2(D)$, respectively. Then the 
compact embedding $H^1_\alpha(D) \subset L^2(D)$ and an application of 
Lemma~\ref{th:Compact1} imply the claim.
\end{proof}

\begin{remark}
  The regularity assumptions on $\partial D$ and $q$ can be 
  weakened using more sophisticated regularity results. It 
  is for instance possible to treat piecewise smooth $q$ by an 
  analogous technique. We do not discuss this issue to avoid 
  technicalities that would not add new ideas.
\end{remark}

\begin{theorem}
\label{eq:fredholm2}
Suppose that $q$ and $D$ satisfy the assumptions of Theorem~\ref{th:FreeGarding}(a) or (b), 
and that the homogeneous equation $v - L_k (q \nabla v) = 0$ in $H^1_{\alpha, q}(D)$ 
has only the trivial solution. Then~\eqref{eq:lippmannD} has a unique solution 
for all $f \in L^2(D,\C^2)$. If $f = q \nabla u^i$, then this solution can be extended by the 
right-hand side of~\eqref{eq:lippmannD} to a solution to the scattering 
problem~\eqref{eq:variationalPeriodic}. Especially, if the integral equation is uniquely 
solvable, then~\eqref{eq:lippmannD} is also uniquely solvable.  
\end{theorem}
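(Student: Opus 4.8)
The plan is to read the statement as a standard consequence of the G\r{a}rding inequality established in Theorem~\ref{th:FreeGarding}, combined with Riesz--Fredholm theory and the integral-equation/scattering-problem equivalence encoded in Lemma~\ref{th:GreenSolution}. Set $A := I - L_k(q \nabla \cdot)$, a bounded linear operator on $H^1_\alpha(D)$: since $|q| \geq q_0 > 0$, the weighted norm $\|\cdot\|_{H^1_{\alpha,q}(D)}$ is equivalent to $\|\cdot\|_{H^1_\alpha(D)}$, so the boundedness of $v \mapsto L_k(q\nabla v)$ from Lemma~\ref{th:bounded} (with $\Im(q)=0$, so that~\eqref{eq;boundIm} holds trivially) transfers to $H^1_\alpha(D)$. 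In case (a), Theorem~\ref{th:FreeGarding} gives $\Re\langle Av, v\rangle_{H^1_\alpha(D)} \geq \|v\|^2_{H^1_\alpha(D)} - \Re\langle K_+ v, v\rangle_{H^1_\alpha(D)}$ with $K_+$ compact; in case (b) the same holds for $-A$ with a constant $C$ and a compact $K_-$.

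First I would convert the G\r{a}rding inequality into a Fredholm statement. In case (a), the operator $A + K_+$ satisfies $\Re\langle (A+K_+)v, v\rangle_{H^1_\alpha(D)} \geq \|v\|^2_{H^1_\alpha(D)}$, hence is coercive and, by Lax--Milgram, boundedly invertible. Writing $A = (A+K_+)\bigl(I - (A+K_+)^{-1}K_+\bigr)$ and noting that $(A+K_+)^{-1}K_+$ is compact, Riesz theory shows that $A$ is Fredholm of index zero. Case (b) is identical after replacing $A$ by $-A$, which has the same kernel and index.

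Next I would invoke the Fredholm alternative. By hypothesis the homogeneous equation $v - L_k(q\nabla v) = 0$ has only the trivial solution in $H^1_{\alpha,q}(D)$, and since this space coincides with $H^1_\alpha(D)$ (equivalent norms), $A$ is injective. A Fredholm operator of index zero with trivial kernel is bijective, so $A^{-1}$ exists and is bounded. As $L_k$ is bounded from $L^2(D,\C^2)$ into $H^1_\alpha(D)$, the datum $L_k(f)$ lies in $H^1_\alpha(D)$, and the equation $Au = L_k(f)$---which is precisely~\eqref{eq:lippmannD}---admits a unique solution $u \in H^1_\alpha(D)$ for every $f \in L^2(D,\C^2)$.

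Finally, for the scattering problem I would take $f = q\nabla u^i \in L^2(D,\C^2)$, solve~\eqref{eq:lippmannD} for $u$, and define $u^s := \div V_k(q\nabla u + f)$ on all of $\Omega$. Since~\eqref{eq:lippmannD} gives $u = u^s|_D$ and $q$ is supported in $D$, one has $q\nabla u = q\nabla u^s$, so that $u^s = \div V_k(q\nabla u^s + f)$ solves~\eqref{eq:lippmann}. Lemma~\ref{th:GreenSolution} then guarantees that $u^s \in H^1_\alpha(\Omega_\rho)$ for all $\rho$, satisfies the weak formulation~\eqref{eq:variationalForm} and the Rayleigh condition, and hence solves~\eqref{eq:variationalPeriodic}. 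Uniqueness for the scattering problem follows by reversing this correspondence: any solution of~\eqref{eq:variationalPeriodic} restricts via Lemma~\ref{th:GreenSolution} to a solution of~\eqref{eq:lippmannD} in $D$, which the assumed uniqueness forces to vanish. The only substantive input is Theorem~\ref{th:FreeGarding}; the main point to get right is the bookkeeping of the integral-equation/scattering-problem equivalence through Lemma~\ref{th:GreenSolution}, everything else being routine Riesz--Fredholm theory.
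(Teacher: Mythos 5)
Your proposal is correct and follows exactly the route the paper has in mind: the paper states this theorem without proof, appealing to the ``standard result'' that the G\r{a}rding inequality of Theorem~\ref{th:FreeGarding} plus Riesz--Fredholm theory yields unique solvability, and then to Lemma~\ref{th:GreenSolution} for the equivalence with the scattering problem. Your write-up (coercivity of $A+K_\pm$ via Lax--Milgram, index-zero Fredholmness, injectivity from the hypothesis via the norm equivalence $H^1_{\alpha,q}(D)=H^1_\alpha(D)$ when $|q|\geq q_0>0$, and the extension/restriction correspondence through Lemma~\ref{th:GreenSolution}) is precisely that standard argument, carried out correctly.
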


\appendix

\section{Smoothness of the Difference of Periodic Green's Functions}
\label{se:appendixA}

The following lemma is a consequence of the corresponding result 
for the fundamental solution to the Helmholtz equation in free-space.

\begin{lemma}
  \label{th:analyticDifference}
  Assume that $k^2 \not= \alpha_j^2$ for all $j \in \Z$.
  Then the difference $G_{k,\alpha} - G_{\i,\alpha}$ can be written as
  \[
    G_{k,\alpha}(x) - G_{\i,\alpha}(x) = \alpha(|x|^2) + C|x|^2\ln(|x|) \beta(|x|^2)
  \]
  where $\alpha$ and $\beta$ are analytic functions and $C$ is a constant.
\end{lemma}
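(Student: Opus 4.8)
The plan is to reduce the statement to the classical near-origin expansion of the Hankel function $H^{(1)}_0$ and then to exploit that, in the difference, the coefficient multiplying $\ln|x|$ vanishes at the origin.

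First I would apply Lemma~\ref{th:decomposeGreen} at the two wave numbers $k$ and $\i$, writing $G_{k,\alpha}(x) = (\i/4)H^{(1)}_0(k|x|) + \Psi_k(x)$ and $G_{\i,\alpha}(x) = (\i/4)H^{(1)}_0(\i|x|) + \Psi_\i(x)$ with $\Psi_k,\Psi_\i$ analytic in $(-2\pi,2\pi)\times\R$. Subtracting, the remainder $\Psi_k-\Psi_\i$ is analytic and may be absorbed into the analytic part of the claim, so the entire singular behaviour of $G_{k,\alpha}-G_{\i,\alpha}$ stems from the free-space difference $(\i/4)\bigl[H^{(1)}_0(k|x|)-H^{(1)}_0(\i|x|)\bigr]$. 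This is exactly the reduction to the free-space fundamental solution announced before the statement.

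Next I would insert the standard power series $H^{(1)}_0(z) = a(z^2) + (2\i/\pi)\ln(z)\,J_0(z)$, where $a$ is entire and $J_0(z)=\sum_{m\geq 0}(-1)^m(z/2)^{2m}/(m!)^2$ is an even entire function with $J_0(0)=1$; thus $J_0(z)=b(z^2)$ for an entire $b$ with $b(0)=1$. Substituting $z=k|x|$ and $z=\i|x|$ and splitting $\ln(k|x|)=\ln k+\ln|x|$ and $\ln(\i|x|)=\ln\i+\ln|x|$, the constants $\ln k$ and $\ln\i$ multiply $b(k^2|x|^2)$ and $b(-|x|^2)$, which are analytic in $|x|^2$ and hence contribute only to the analytic part. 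What stays attached to $\ln|x|$ in the difference is, up to the factor $2\i/\pi$, the quantity $b(k^2|x|^2)-b(-|x|^2)$.

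The crux is the observation that $c(w):=b(k^2w)-b(-w)$ is entire and satisfies $c(0)=b(0)-b(0)=0$; hence $c(w)=w\,\beta(w)$ for an entire $\beta$. With $w=|x|^2$ this turns the residual logarithmic term into $(2\i/\pi)\ln|x|\cdot|x|^2\beta(|x|^2)$, and multiplying by $\i/4$ produces the constant $C=(\i/4)(2\i/\pi)=-1/(2\pi)$ together with the claimed term $C|x|^2\ln(|x|)\beta(|x|^2)$. Collecting the genuinely analytic contributions — the even entire Hankel parts $a(z^2)$, the $\ln k$ and $\ln\i$ pieces, and $\Psi_k-\Psi_\i$ — into $\alpha$ finishes the proof. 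The main difficulty is pure bookkeeping: one must keep $\ln|x|$ cleanly separated from the constant logarithms $\ln k$ and $\ln\i$ and then verify the second-order vanishing of its coefficient, which is precisely the cancellation $J_0(0)-J_0(0)=0$ that upgrades the singularity from $\ln|x|$ (present in each Green's function) to $|x|^2\ln|x|$ (hence $C^1$) in the difference. A minor caveat is that $\Psi_k-\Psi_\i$ is analytic in $x$ but need not be radial, so strictly $\alpha$ is analytic in $x$ while only the logarithmic coefficient $\beta$ is genuinely a function of $|x|^2$.
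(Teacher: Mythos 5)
Your proof is correct and carries out exactly the route the paper intends: the paper omits the argument entirely, saying only that the lemma is ``a consequence of the corresponding result for the fundamental solution to the Helmholtz equation in free-space,'' and your reduction via Lemma~\ref{th:decomposeGreen} (applied at wave numbers $k$ and $\i$), the classical expansion $H^{(1)}_0(z)=a(z^2)+(2\i/\pi)\ln(z)\,J_0(z)$, and the cancellation $b(0)-b(0)=0$ that upgrades the singularity to $|x|^2\ln|x|$ is precisely that argument written out in detail, with the correct constant $C=-1/(2\pi)$. Your closing caveat is also well taken: since $\Psi_k-\Psi_\i$ is analytic but not radial, the lemma is literally accurate only if $\alpha$ is read as an analytic function of $x$ rather than of $|x|^2$ --- a harmless imprecision in the paper, since only the resulting $C^1$-smoothness of the kernel difference is used in Corollary~\ref{th:compactness}.
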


The smoothness of the difference $G_{k,\alpha} - G_{\i,\alpha}$ implies the 
following compactness statement for the corresponding volume potentials. 

\begin{corollary}
  \label{th:compactness}
  Assume that $k^2 \not= \alpha_j^2$ for all $j \in \Z$.
  Then $L_k - L_\i$ is compact on $H^1_{\alpha}(D)$.
\end{corollary}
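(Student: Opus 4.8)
The plan is to exploit that the two source terms cancel in the difference, so that $(V_k-V_\i)f$ satisfies an inhomogeneous Helmholtz equation whose right-hand side is already two orders smoother than $f$; interior elliptic regularity then turns this into a genuine gain of regularity, and Rellich's theorem converts the gain into compactness. Throughout I regard $L_k-L_\i$ through its natural factorisation as a map $L^2(D,\C^2)\to H^1_\alpha(D)$: since a compact operator composed with a bounded one stays compact, and since the usage of this corollary always precomposes $L_k-L_\i$ with a bounded map into $L^2(D,\C^2)$ (such as $v\mapsto Q\nabla v$), it suffices to prove that $L_k-L_\i$ maps $L^2(D,\C^2)$ \emph{compactly} into $H^1_\alpha(D)$.

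First I would fix $f\in L^2(D,\C^2)$ and set $u=(V_k-V_\i)f$, acting componentwise. By Lemma~\ref{th:H2smoothness}, applied with wave numbers $k$ and $\i$ (the hypothesis $\i^2=-1\neq\alpha_j^2$ is trivially met), both $V_kf$ and $V_\i f$ lie in $H^2_\alpha(\Omega_R)$ for every $R>0$, and each solves, in the distributional sense on $\Omega$, its defining equation with the \emph{same} source $-f\mathbf{1}_D$, namely $(\Delta+k^2)V_kf=-f\mathbf{1}_D$ and $(\Delta-1)V_\i f=-f\mathbf{1}_D$. Rewriting $(\Delta+k^2)V_\i f=(\Delta-1)V_\i f+(k^2+1)V_\i f$ and subtracting, the source cancels and I obtain
\begin{equation*}
  (\Delta+k^2)\,u=-(k^2+1)\,V_\i f \qquad\text{in }\Omega,
\end{equation*}
whose right-hand side lies in $H^2_\alpha$ with $\|V_\i f\|_{H^2(D')}\le C\|f\|_{L^2(D)}$ on any bounded neighbourhood $D'$ of $\overline{D}$.

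The key step is then interior elliptic regularity for the elliptic operator $\Delta+k^2$. Choosing a bounded Lipschitz $D'$ with $\overline{D}\Subset D'\Subset\Omega$ and using that $u$ solves the displayed equation throughout a full neighbourhood of $\overline{D}$ (the only inhomogeneity near $\overline{D}$ is the smooth right-hand side $-(k^2+1)V_\i f$), the standard interior estimate yields $u\in H^4_\alpha$ on a neighbourhood of $\overline{D}$ together with a bound by $\|V_\i f\|_{H^2(D')}+\|u\|_{L^2(D')}\le C\|f\|_{L^2(D)}$. Consequently $L_k-L_\i=\div(V_k-V_\i)$ is bounded from $L^2(D,\C^2)$ into $H^2_\alpha(D')$, and since $D'$ is a bounded Lipschitz domain, Rellich's theorem gives a compact embedding $H^2_\alpha(D')\hookrightarrow H^1_\alpha(D')$. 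Composing with the bounded restriction $H^1_\alpha(D')\to H^1_\alpha(D)$ shows that $L_k-L_\i$ maps $L^2(D,\C^2)$ compactly into $H^1_\alpha(D)$, as claimed. A pleasant by-product of routing the regularity through $D'$ is that no smoothness of $\partial D$ is needed.

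I expect the only genuine obstacle to be the justification that $V_kf$ and $V_\i f$ really are distributional solutions carrying the \emph{same} source $-f\mathbf{1}_D$ on all of $\Omega$, so that the cancellation is legitimate there (and not merely inside $D$); this is essentially the defining property of the quasi-periodic Green's functions already underlying Lemma~\ref{th:GreenSolution}, so it should only require careful bookkeeping. An alternative, more computational route avoids the PDE and invokes Lemma~\ref{th:analyticDifference} directly: the difference kernel is $C^1$ with its singular part confined to the term $|x|^2\ln|x|\,\beta(|x|^2)$, so the associated volume potential is smoothing of high order. Turning this into $H^4$-mapping bounds, however, forces one to control third-order derivatives of the kernel, where the singularity is only weakly singular ($\sim|x|^{-1}$) and differentiation under the integral must be handled with Schur-type estimates. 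I would prefer the elliptic-regularity argument precisely because it sidesteps these singular-integral estimates entirely.
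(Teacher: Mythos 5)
Your proposal is correct in substance, but it takes a genuinely different route from the paper. The paper's proof of Corollary~\ref{th:compactness} is exactly the ``alternative, more computational route'' you describe and then discard: it rests on Lemma~\ref{th:analyticDifference}, by which the difference kernel $G_{k,\alpha}-G_{\i,\alpha}=\alpha(|x|^2)+C|x|^2\ln(|x|)\beta(|x|^2)$ has a continuous gradient and only logarithmically singular second derivatives, so that $\div (V_k-V_\i)$ gains at least one order of Sobolev regularity over the bounded map $L^2(D,\C^2)\to H^1_\alpha(D)$, and Rellich's embedding yields compactness. Your argument instead runs through the PDE: the sources $-f\mathbf{1}_D$ cancel in the difference, $(V_k-V_\i)f$ solves $(\Delta+k^2)u=-(k^2+1)V_\i f$ with an $H^2$ right-hand side controlled by $\|f\|_{L^2(D)}$ (Lemma~\ref{th:H2smoothness} at wave number $\i$), and interior elliptic regularity plus Rellich finish the job. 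What your route buys: it needs nothing about the Green's functions beyond the distributional identity $(\Delta+k^2)G_{k,\alpha}=-\delta_0$ (a consequence of Lemma~\ref{th:decomposeGreen}) and the $L^2\to H^2$ bound of Lemma~\ref{th:H2smoothness}, and it avoids every weakly singular kernel estimate. What the paper's route buys: Lemma~\ref{th:analyticDifference} is stated in the appendix anyway, so the corollary becomes an immediate consequence with no further analysis. Your preliminary reduction --- that it suffices to prove compactness of $L_k-L_\i:L^2(D,\C^2)\to H^1_\alpha(D)$, since the corollary is only ever invoked precomposed with the bounded map $v\mapsto Q\nabla v$ --- is also the correct reading of the loosely phrased claim ``compact on $H^1_\alpha(D)$''.

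One technical step needs repair. You choose $D'$ with $\ol{D}\Subset D'\Subset\Omega$, but $\Omega=(-\pi,\pi)\times\R$ has lateral boundaries, and $\ol{D}$ may touch $\{x_1=\pm\pi\}$ --- for instance when the grating material fills a full period, as for slab-like or lamellar structures. In that case no such $D'$ exists and ``interior'' regularity is not available near those points. The fix is standard and uses a fact recorded in the paper right after Lemma~\ref{th:H2smoothness}: the potentials $V_kf$ and $V_\i f$ extend to $\alpha$-quasi-periodic functions in $H^2_{\loc}(\R^2)$, and the two distributional identities hold in all of $\R^2$ with the quasi-periodically extended source, which still cancels in the difference. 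Interior regularity can then be applied in $\R^2$, where the compact set $\ol{D}$ has a genuine neighbourhood, and restriction to $D$ gives the desired bound; equivalently, one may work in the periodic scale $H^s_{\per}$, where the $x_1$-direction carries no boundary at all. With this adjustment your argument is complete.
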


\section{Periodic Extension Operators}
\label{se:extension}

In this section, we exemplary show how to 
construct a periodic extension operator 
\[
  E: \, H^1_{\alpha}(D) \to H^1_{\alpha}(\Omega),
  \quad \left. E(u) \right|_{D} = u, 
  \quad \left. E(u) \right|_{\Omega \setminus \Omega_{2\rho}} = 0, 
\]
that is used in Theorem~\ref{l-coerc_2}.
We will only construct $E$ for the case that the boundary of 
$D = \{ (x_1, x_2)^\top: \, x_1 \in (-\pi, \pi), \,  \zeta_-(x_1) < x_2 < \zeta_+(x_1) \}$
is given by two $2\pi$-periodic Lipschitz continuous functions 
$\zeta_\pm : \, \R \to (-\rho, \rho)$ such that 
$\zeta_- < - 2\rho/3$, $\zeta_+> 2\rho/3$,  and 
$| \zeta_\pm(x_1) - \zeta_\pm(x_1') | \leq M |x_1 - x_1'|$ for $x_1, x_1' \in \R$.
The general case can be tackled using local patches as in~\cite[Appendix A]{McLea2000}.

For $u \in H^1_\alpha(D)$, we define 
\[
  v(x_1, x_2) =
  \begin{cases}
    u(x_1, 2 \zeta_+(x_1) - x_2) & \text{if } \zeta_+(x_1) < x_2 < 2 \zeta_+(x_1) - \zeta_-(x_1), \\
    u(x_1, x_2) & \text{if } \zeta_-(x_1) < x_2 < \zeta_+(x_1), \\
    u(x_1,  2 \zeta_-(x_1) - x_2) & \text{if } 2 \zeta_-(x_1) - \zeta_+(x_1) < x_2 < \zeta_-(x_1).
  \end{cases}
\]
Note that $2 \zeta_+(x_1) - \zeta_-(x_1)> 2 \rho $ and that 
$2 \zeta_-(x_1) - \zeta_+(x_1) < - 2 \rho$. 
Straightforward computations show that 
$\| v \|_{H^1(\Omega_{2\rho})}\leq \max(\sqrt{3}, 2\sqrt{2}M) \| u \|_{H^1_\alpha(D)}$,
and the definition of $v$ implies that this function is $\alpha$-quasi-periodic.

To define the periodic extension operator, we use a smooth cut-off function 
$\chi: \R \to \R$, that satisfies $0\leq \chi \leq 1$, 
$\chi(x_2) = 1$ for $|x_2| \leq \rho$, and $\chi(2\rho)=0$
for $|x_2|\geq 2\rho$. Then we set 
\[
  E(u) = w, \qquad w(x) =
  \begin{cases}
    \chi(x_2) v(x) & \text{for } x \in \Omega_{2\rho},\\
    0      & \text{else}.
  \end{cases}
\]

\bibliographystyle{siam}
\bibliography{ip-biblio}

\end{document}